%
%
\documentclass[11pt, reqno]{amsart}
\usepackage{amssymb,mathrsfs,graphicx,extpfeil}
\usepackage{epsfig}
\usepackage{indentfirst, latexsym, amssymb, enumerate,amsmath,graphicx}
\usepackage{float}
\usepackage{colortbl}
\usepackage{epsfig,subfigure}
\usepackage{mathtools}

\topmargin-0.1in \textwidth6.in \textheight8.5in \oddsidemargin0in
\evensidemargin0in

\newtheorem{theorem}{Theorem}[section]
\newtheorem{lemma}{Lemma}[section]

\newtheorem{remark}{Remark}[section]

\newtheorem{definition}{Definition}[section]

\def\charf {\mbox{{\text 1}\kern-.30em {\text l}}}







\title[Synchronization of the Kuramoto model with inertia and frustration]{Exponential synchronization of the Kuramoto model with inertia and frustration under locally coupled network}

\author[Zhu]{Tingting Zhu \textsuperscript{1}}

\author[Zhang]{Xiongtao Zhang \textsuperscript{2,}\textsuperscript{*}}

\thanks{$1$ School of Artificial Intelligence and Big Data, Hefei University, Hefei, China (ttzhud201880016@163.com)}
\thanks{$2$ School of Mathematics and Statistics, Wuhan University, Wuhan, China (zhangxt@whu.edu.cn)}
\thanks{$^*$ Corresponding author. }



\begin{document}

\date{\today}

\subjclass{34D05, 34D06.} 
\keywords{Kuramoto model; Inertia; Frustration; Network topology; Frequency synchronization}

\begin{abstract}
We study the collective synchronized behavior of the Kuramoto model with inertia and frustration effects on a connected and symmetric network. We aim to establish sufficient frameworks for achieving complete frequency synchronization, taking into account initial configuration, small inertia and frustration, and large coupling strength. More precisely, we first demonstrate that the phase diameter will be uniformly bounded by a small value after a finite time. Then we prove that the frequency diameter exhibits exponential decay to zero. Our approach relies on a careful construction of energy functionals, which effectively control the dissipation of phase and frequency diameters.
\end{abstract}
\maketitle 
\centerline{\date}

\section{Introduction}\label{sec:1}
\vspace{0.5cm}
The occurrence of collective synchronized behaviors in large oscillatory systems is ubiquitous in the real world. For instance, synchronous flashing of fireflies, heart beat generation, and the Josephson junction arrays \cite{A-B-P-R05,B-B66,P75,P-R-K01, S00, W-C-S96}. 
Due to their potential applications, synchronous phenomena have attracted significant interest in diverse scientific communities such as biology, physics, and engineering. The investigation of synchronization from a mathematical perspective was initially undertaken by Winfree and Kuramoto \cite{K75, W67}, in which the Kuramoto model was introduced. This model has became a representative model for describing synchronization and has been extensively studied \cite{C-H-J-K12, D-X13, H-K-P15, H-K-R}. To better simulate the realistic systems, researchers have introduced additional physical effects into the original Kuramoto model, including the frustration, inertia, time delay, and network topology \cite{D-H-K20, Do-B12, E91, H-K-L14, H-K-P18, H-K-Z18, H-C-Y-S99, S-K86,  Ta-L-O97, T-L-O97}.

 In this paper, we aim to investigate the Kuramoto model with inertia and frustration on a locally coupled network. To fix the idea, we consider a symmetric and connected network, which can be registered by a weighted graph $\mathcal{G} = (\mathcal{V}, E, A)$. Here $\mathcal{V} = \{1,2, \cdots, N\}$ is the vertex set, $A = (a_{ij})$ is an $N \times N$ matrix whose element $a_{ij}$ denotes the communication weight flowing from vertex $j$ to $i$, and $E = \{(i,j) : a_{ij} > 0\} \subset \mathcal{V} \times \mathcal{V}$ is the edge set. 
For a given graph $\mathcal{G}$, we assume that the oscillators in the Kuramoto model are located at the vertices of the graph $\mathcal{G}$, with their interactions determined by the edges in $E$ and the communication weights in $A$. Let $\theta_i = \theta_i(t) \in \mathbb{R}$ be the phase of the $i$-th oscillator with natural frequency $\Omega_i$. Then, the dynamics of Kuramoto oscillators with inertia $m>0$ and frustration $\alpha \in (0, \frac{\pi}{2})$ is governed by the following second-order phase system:
\begin{equation}\label{sp_dynamic}
m \ddot{\theta}_i(t) + \dot{\theta}_i(t) = \Omega_i + \frac{K}{N} \sum_{l=1}^N a_{il} \sin (\theta_l(t)- \theta_i(t) + \alpha), \quad t > 0, \ i =1,2, \ldots, N,
\end{equation}
subject to the initial data:
\begin{equation}\label{initial}
(\theta_i, \omega_i)(0) = (\theta_{i0},\omega_{i0}),
\end{equation}
where $K>0$ represents the coupling strength and $\omega_i(t) = \dot{\theta}_i(t)$ is the instantaneous frequency.

In the absence of frustration ($\alpha = 0$), the inertial Kuramoto model was initially proposed  by Ermentrout \cite{E91} as an explanation for the slow adaptation to frequency synchronization in certain species of fireflies. Significant progress has been made in understanding the dynamics of this model under different network structures and initial conditions. For the special case of an all-to-all network, i.e., $a_{il}=1$ in \eqref{sp_dynamic}, the emergence of exponential synchronization was proven when the oscillators are initially contained within a half circle \cite{C-H-N13, C-H-Y11}. Later on, the complete frequency synchronization was obtained under more general initial conditions \cite{C-H-M18, C-L-H-X-Y14, H-J-K19}. In the case of a connected and symmetric network, the second-order gradient-like flow structure was employed to establish the emergence of synchronization \cite{C-L19, C-L-H-X-Y14, L-X-Y14}. We refer to \cite{C-H-M18, W-Q17, W-C21} for synchronization with more general settings. 
 However, researches on the dynamics of the non-zero frustration case ($\alpha \ne 0$) in the Kuramoto model are relatively limited. Authors in \cite{Ha-K-L14} provided several sufficient frameworks leading to the exponential frequency synchronization for the model on an all-to-all network. They further studied the uniqueness and well-ordering property of emergent phase-locked states \cite{L-H16}. We refer to \cite{D-Z-P-L-J18, D-B12} for more information. To the best of our knowledge, the dynamics of the Kuramoto oscillators with inertia and frustration on a locally coupled network remain relatively unexplored from a theoretic standpoint. 
 At least, there are two basic and natural questions:

 \begin{itemize}
 	\item Will the emergence of frequency synchronization occur in \eqref{sp_dynamic}?
 	\item What is the convergence rate of the emergence of synchronization? 
 \end{itemize}

The problems outlined above can indeed pose significant challenges. The first major difficulty lies in deriving the dissipation of the phase diameter in \eqref{sp_dynamic}. In the all-to-all case, one may show well ordering structure such as in \cite{L-H16}, so that the diameter $D(\theta(t))=\theta_i-\theta_j$ for fixed $i,j$. This allows the diameter to be second-order smooth and its dynamics can be effectively described by a second-order equation as shown in \cite{Ha-K-L14}. However, establishing such a well-ordering structure for the model \eqref{sp_dynamic} on a network still remains an open problem. Consequently, the diameter is only Lipschitz continuous and can not be governed by a second order equation. The second main difficulty arises from the lack of a gradient-flow structure in the system \eqref{sp_dynamic} due to the presence of the frustration term. As a result, even if one can derive the boundedness of the phase diameter, it is impossible to employ the criteria in \cite{C-L19,  C-L-H-X-Y14, L-X-Y14} to yield the convergence to a synchronous state, let alone exponential convergence.

Then, our contributions in this paper are two folds. Firstly, we establish a suitable energy functional that depends on the relative phases and frequencies of the system. This energy functional will play an important role in  controlling the phase diameter (see \eqref{p_functional}). Then, our analysis on the dissipation of this energy functional reveals that the phase diameter will decrease significantly within a finite time (see Lemma \ref{p_bound}). Secondly, we prove the frequency synchronization and yield the exponential convergence rate. To accomplish this, we combine together the dynamics of the frequencies and accelerations (see \eqref{f_functional}) to obtain a Cucker-Smale type estimate, which eventually shows the exponential emergence of the frequency synchronization (see Theorem \ref{main}).    


The rest of paper is organized as follows. In Section \ref{sec:2}, we introduce several basic concepts and estimates, and provide our sufficient frameworks and main result. In Section \ref{sec:3}, we show that the phase diameter of the ensemble of Kuramoto oscillators will be confined in a quarter circle after some finite time. In Section \ref{sec:4}, we present the complete frequency synchronization emerges at the exponential convergence rate. Section \ref{sec:5} is devoted to a brief summary of our main result and future discussion.
\newline

\noindent {\bf{Notation:}} For convenience, we introduce the notation related to phase and frequency diameters, lower and upper bound of the communication weight as below

\begin{align*}
&\theta(t) := (\theta_1(t), \ldots, \theta_N(t)), \quad \omega(t) := (\omega_1(t), \ldots, \omega_N(t)), \quad \Omega := (\Omega_1, \ldots, \Omega_N),\\
&D(\theta(t)):= \max_{1 \le i, j \le N}|\theta_i(t) - \theta_j(t)|, \quad D(\omega(t)) := \max_{1 \le i,j \le N} |\omega_i(t) - \omega_j(t)|, \\
&D_{\Omega} := \sum_{i=1}^N \sum_{j=1}^N |\Omega_i - \Omega_j|^2,\quad a_u := \max_{(i,j) \in E} a_{ij}, \quad a_l := \min_{(i,j) \in E} a_{ij}.\\
\end{align*}

\section{Preliminaries and main result}\label{sec:2}
\setcounter{equation}{0}

In this section, we first introduce some basic concepts and related estimates which will be used throughout the paper. Then, we give our assumptions and present our main result.

\subsection{Basic concepts} In this part, we provide some concepts such as connectivity and synchronization for the second-order Kuramoto system \eqref{sp_dynamic}.
\begin{definition}
Let $\mathcal{G} = (\mathcal{V}, E, A)$ be a graph. Then
\begin{enumerate}
\item A path from $i$ to $j$ is a sequence $i_0, i_1, \ldots,i_k$ with $i_0 = i$ and $i_k = j$ such that each successive pair of vertices is an edge of graph $\mathcal{G}$. 

\item If there exists a path from $i$ to $j$, then vertex $j$ is said to be reachable from vertex $i$.

\item A graph $\mathcal{G}$ is said to be connected if each vertex can be reachable from any other vertex of $\mathcal{G}$.
\end{enumerate}
\end{definition}

\begin{definition}
Let $\theta = \theta(t)$ be a solution to Kuramoto model \eqref{sp_dynamic}. The Kuramoto model exhibits asymptotic complete-frequency synchronization iff the relative frequency differences tend to zero asymptotically:
\begin{equation*}
\lim_{t \to \infty} |\dot{\theta}_i(t) - \dot{\theta}_j(t)| = 0, \quad \forall  \ i \ne j.
\end{equation*}
\end{definition}

\subsection{Preparatory estimates}
Let's consider a connected graph $\mathcal{G}$ and we see that for any $(i,j) \in \mathcal{V} \times \mathcal{V}$, there exists a shortest path from $i$ to $j$, say
\begin{equation*}
i = i_0 \ \to \ i_1 \ \to \ \cdots \ \to i_{r_{ij}} = j, \quad (i_{l+1},i_{l}) \in E, \ l=0,1,\cdots, r_{ij} -1,
\end{equation*}
 Here, $r_{ij}$ denotes the distance between vertices $i$ and $j$, i.e., the length of the shortest path from vertex $i$ to $j$. We set
\begin{equation}\label{para_graph}
r := \max_{1 \le i,j \le N} r_{ij}, \quad \Lambda_1 := \frac{1}{1+r |E^c|},
\end{equation}
where $E^c$ is the complement of edge set $E$ in $\mathcal{V} \times \mathcal{V}$ and $|E^c|$ denotes its cardinality.
As we know the graph $\mathcal{G}$ may not be complete, we first provide an equivalence relation in the following lemma.

\begin{lemma}\label{pd_equiv}
Suppose that the graph $\mathcal{G} = (\mathcal{V},E, A)$ is connected, and let $\theta_i(t)$ and $\omega_i(t)$ be the phase and frequency of the Kuramoto oscillator located at the vertex $i$ at time $t$, respectively. Then we have
\begin{equation*}
\begin{aligned}
&\Lambda_1 \sum_{i=1}^N \sum_{j=1}^N |\theta_i(t) - \theta_j(t)|^2 \le \sum_{(i,j) \in E} |\theta_i(t) - \theta_j(t)|^2 \le \sum_{i=1}^N \sum_{j=1}^N |\theta_i(t) -\theta_j(t)|^2,\\
&\Lambda_1 \sum_{i=1}^N \sum_{j=1}^N |\omega_i(t) - \omega_j(t)|^2 \le \sum_{(i,j) \in E} |\omega_i(t) - \omega_j(t)|^2 \le \sum_{i=1}^N \sum_{j=1}^N |\omega_i(t) -\omega_j(t)|^2,
\end{aligned}
\end{equation*}
where $\Lambda_1$ is a positive constant defined in \eqref{para_graph}.
\end{lemma}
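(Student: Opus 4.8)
The right-hand inequality in each line is immediate: since $E \subseteq \mathcal{V}\times\mathcal{V}$ and all summands are nonnegative, $\sum_{(i,j)\in E}|\theta_i-\theta_j|^2 \le \sum_{i=1}^N\sum_{j=1}^N|\theta_i-\theta_j|^2$, and likewise for $\omega$. So the content is entirely in the left-hand (reverse) inequality, and since the phase and frequency estimates have exactly the same form, it suffices to prove $\Lambda_1 \sum_{i,j}|\theta_i-\theta_j|^2 \le \sum_{(i,j)\in E}|\theta_i-\theta_j|^2$.

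The plan is to bound each ``missing'' term $|\theta_i-\theta_j|^2$ with $(i,j)\in E^c$ by a sum of edge terms along a shortest path from $i$ to $j$. Fix such a pair and write the shortest path $i=i_0\to i_1\to\cdots\to i_{r_{ij}}=j$ with each $(i_{l+1},i_l)\in E$. By telescoping, $\theta_i-\theta_j = \sum_{l=0}^{r_{ij}-1}(\theta_{i_l}-\theta_{i_{l+1}})$, and the Cauchy--Schwarz inequality (in the form $(\sum_{l=1}^k x_l)^2 \le k\sum_{l=1}^k x_l^2$) gives
\begin{equation*}
|\theta_i-\theta_j|^2 \le r_{ij}\sum_{l=0}^{r_{ij}-1}|\theta_{i_l}-\theta_{i_{l+1}}|^2 \le r\sum_{l=0}^{r_{ij}-1}|\theta_{i_l}-\theta_{i_{l+1}}|^2,
\end{equation*}
using $r_{ij}\le r$ from \eqref{para_graph}. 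Next I would sum this over all $(i,j)\in E^c$. Each edge $(p,q)\in E$ can appear as a consecutive pair in the chosen shortest paths of at most $|E^c|$ different pairs (one per element of $E^c$), so after summing, every term $|\theta_p-\theta_q|^2$ with $(p,q)\in E$ is counted at most $r|E^c|$ times on the right. Hence
\begin{equation*}
\sum_{(i,j)\in E^c}|\theta_i-\theta_j|^2 \le r|E^c|\sum_{(p,q)\in E}|\theta_p-\theta_q|^2.
\end{equation*}
Adding $\sum_{(i,j)\in E}|\theta_i-\theta_j|^2$ to both sides yields $\sum_{i,j}|\theta_i-\theta_j|^2 \le (1+r|E^c|)\sum_{(i,j)\in E}|\theta_i-\theta_j|^2$, which is precisely the claimed inequality with $\Lambda_1 = \frac{1}{1+r|E^c|}$. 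The identical argument with $\omega$ in place of $\theta$ gives the second line.

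The only delicate point is the combinatorial counting step: one must argue cleanly that a fixed edge is used as a consecutive link in the fixed family of shortest paths at most $|E^c|$ times. This follows simply because we have selected exactly one shortest path per pair in $E^c$ (there are $|E^c|$ such pairs), so even in the worst case where some edge lies on every one of these paths, it contributes at most $|E^c|$ times, each time with multiplicity at most $r$ inside a single path's bound. I expect this bookkeeping — rather than any analytic estimate — to be the main thing to state carefully; everything else is telescoping plus Cauchy--Schwarz.
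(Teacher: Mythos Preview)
Your proof is correct and follows essentially the same route as the paper's: shortest path, Cauchy--Schwarz on the telescoping sum, then sum over $E^c$ and add back the edge terms. The paper sidesteps your counting step entirely by bounding the single-path sum $\sum_{l}|\theta_{i_l}-\theta_{i_{l+1}}|^2 \le \sum_{(i,j)\in E}|\theta_i-\theta_j|^2$ \emph{before} summing over $E^c$, so the factor $|E^c|$ appears trivially and no edge-usage bookkeeping is needed.
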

\begin{proof}
We only prove the first inequality since the second one can be similarly dealt with. For the right part of the first inequality, it is obvious that
\begin{equation*}
\sum_{(i,j) \in E} |\theta_i(t) - \theta_j(t)|^2 \le \sum_{i=1}^N \sum_{j=1}^N |\theta_i(t) -\theta_j(t)|^2.
\end{equation*}
We next verify the left part of the first inequality. In fact, for $(k,l) \in E^c$, due to the connectivity of the graph, we can find a shortest path from vertex $k$ to $l$:
\begin{equation*}
k = i_0 \ \to \ i_1 \ \to \ \cdots \ \to \ i_p = l, \qquad (i_{s+1},i_s) \in E, \ s = 0, 1, \ldots, p-1,
\end{equation*}
where $p$ is the length of the shortest path from $k$ to $l$.
It is easy to see that
\begin{equation*}
|\theta_k - \theta_l| \le |\theta_{i_0} - \theta_{i_1}| + |\theta_{i_1} - \theta_{i_2}| + \cdots + |\theta_{i_{p-1}} - \theta_{i_p}|.
\end{equation*}
Then for $(k,l) \in E^c$, we have
\begin{equation*}
\begin{aligned}
|\theta_k(t) - \theta_l(t)|^2 &\le p[|\theta_{i_0}(t) - \theta_{i_1}(t)|^2 + |\theta_{i_1}(t) - \theta_{i_2}(t)|^2 + \cdots + |\theta_{i_{p-1}}(t) - \theta_{i_p}(t)|^2] \\
&\le r [|\theta_{i_0}(t) - \theta_{i_1}(t)|^2 + |\theta_{i_1}(t) - \theta_{i_2}(t)|^2 + \cdots + |\theta_{i_{p-1}}(t) - \theta_{i_p}(t)|^2] \\
&\le r \sum_{(i,j) \in E} |\theta_i(t) - \theta_j(t)|^2,
\end{aligned}
\end{equation*}
where the parameter $r$ is defined in \eqref{para_graph}.
This yields 
\begin{equation*}
\sum_{(k,l) \in E^c} |\theta_k(t) - \theta_l(t)|^2 \le |E^c| r \sum_{(i,j) \in E} |\theta_i(t) - \theta_j(t)|^2.
\end{equation*}
Thus, it follows that
\begin{equation*}
\sum_{(k,l) \in E^c} |\theta_k(t) - \theta_l(t)|^2 + \sum_{(k,l) \in E} |\theta_k(t) - \theta_l(t)|^2 \le (1 + |E^c| r) \sum_{(i,j) \in E} |\theta_i(t) - \theta_j(t)|^2.
\end{equation*}
Then, we derive that
\begin{equation*}
\frac{1}{1 + r|E^c| } \sum _{i=1}^N \sum_{j=1}^N |\theta_i(t) - \theta_j(t)|^2 \le \sum_{(i,j) \in E} |\theta_i(t) - \theta_j(t)|^2,
\end{equation*}
which completes the proof.
\end{proof}

\subsection{Main result} We now present our main result which states that the frequency synchronization occurs exponentially fast under sufficient conditions. For simplicity, we set $D_0$ and $D^\infty$ to be positive constants and $\mathcal{E}_1(t)$ to be an energy function below
\begin{align}\label{p_functional}
\mathcal{E}_1(t) = \ 2m \sum_{i,j=1}^N  (\theta_{i} - \theta_{j}) (\omega_{i} - \omega_{j}) + (1 - m\sqrt{K}) \sum_{i,j=1}^N(\theta_{i} - \theta_{j})^2 + 2m^2 \sum_{i,j=1}^N (\omega_i - \omega_j)^2.
\end{align}
Then we make following assumptions:

\begin{align}
&\textbf{Assumption $(\mathcal{A})$:}\notag\\
&\mathcal{E}_1(0) < \frac{1}{8} D_0^2, \quad D^\infty \in (0, \frac{\pi}{2}),\quad D_0 \in (0,\pi), \quad D^\infty < D_0, \label{Con_11}\\
&\sin \alpha < \frac{a_l \cos \alpha\cos D^\infty}{12a_u (1+ r|E^c|)\sin D^\infty}, \quad m< 1, \label{Con_12}\\
& \sqrt{K} >  \frac{(1+r|E^c|)}{a_l \cos \alpha}\max\left\{ \frac{D_0}{\sin D_0}, \frac{1}{\cos D^\infty}\right\}, \label{Con_13}\\
&m \sqrt{K} < \frac{1}{8}, \quad mK < \frac{a_l}{a_u^2 (1+r|E^c|) \cos \alpha} \min\left\{ \frac{\sin D_0}{36D_0},\frac{\cos D^\infty}{24}\right\}, \label{Con_14} \\
&K\sin \alpha < \frac{1}{8a_u\sin D^\infty},\label{Con_15}\\
&C_1 = \left[ \frac{3(1+|E^c|r)D_0}{K^{\frac{3}{2}}a_l \cos \alpha \sin D_0}+ 12\frac{m}{K^{\frac{1}{2}}} \right] D_\Omega + 12N^2K^{\frac{1}{2}} a_u^2 \sin^2 \alpha \left[\frac{(1+|E^c|r)D_0}{a_l \cos \alpha \sin D_0}  + 4mK\right] <\frac{1}{16} (D^\infty)^2. \label{Con_16} 
\end{align}

\begin{remark}
We briefly comment on Assumption $(\mathcal{A})$.
	\begin{enumerate}
		\item The set of parameters satisfying Assumption $(\mathcal{A})$ is nonempty. In fact, we may fix all the other parameters except for $m$, $K$ and $\alpha$. Then we set $m=\alpha=K^{-2}$ and let $K\gg 1$. Under this setting, Assumption $(\mathcal{A})$ is obviously fulfilled.
		\item If the oscillators are initially distributed in a half circle, i.e., $D(\theta(0))$ is close to $\pi$, our assumption $(\mathcal{A})$ may not work. This is the restriction of our method.     
	\end{enumerate}
\end{remark}
Under the Assumption $(\mathcal{A})$, our main result on exponential frequency synchronization is presented as follows.
\begin{theorem}\label{main}
Suppose the graph $\mathcal{G}$ is symmetric and connected, let $(\theta(t), \omega(t))$ be a solution to system \eqref{sp_dynamic} and \eqref{initial}, and suppose the Assumption $(\mathcal{A})$ is fulfilled. Then there exist a positive constant $C$
and a finite time $t_*$ such that 
\begin{equation*}
D(\omega(t)) \le C e^{- \frac{\sqrt{K}}{2}(t - t_*)}, \quad \text{for all} \ \   t \ge t_*.
\end{equation*}
\end{theorem}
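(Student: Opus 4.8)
The plan is to run the argument in two stages, mirroring the energy functional $\mathcal{E}_1$ of \eqref{p_functional} and the functional introduced in \eqref{f_functional}, which I denote $\mathcal{E}_2$: first establish that the phase diameter enters and then remains inside the quarter circle $(0,D^\infty)$ after a finite time $t_*$ (this is Lemma \ref{p_bound}), and then, on $[t_*,\infty)$, control the frequency diameter by $\mathcal{E}_2$ and show it decays at rate $\sqrt K$.

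\medskip

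\textbf{Stage 1: phase confinement (Lemma \ref{p_bound}).} I would differentiate $\mathcal{E}_1$ along \eqref{sp_dynamic}. Using $\tfrac{d}{dt}\sum_{i,j}(\theta_i-\theta_j)^2=2\sum_{i,j}(\theta_i-\theta_j)(\omega_i-\omega_j)$ and $m(\dot\omega_i-\dot\omega_j)=-(\omega_i-\omega_j)+(\Omega_i-\Omega_j)+\tfrac{K}{N}\big[\sum_l a_{il}\sin(\theta_l-\theta_i+\alpha)-\sum_l a_{jl}\sin(\theta_l-\theta_j+\alpha)\big]$, the derivative of $\mathcal{E}_1$ collapses, after collecting terms, into a $-\tfrac{\sqrt K}{2}$-type dissipation together with a coupling contribution and a natural-frequency contribution. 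For the coupling term I would use the symmetry $a_{il}=a_{li}$ to rewrite the double sum as $N\sum_{i,l}a_{il}(\theta_i-\theta_c)\sin(\theta_l-\theta_i+\alpha)$, with $\theta_c$ the phase average, and then symmetrize in $(i,l)$; expanding $\sin(\theta_l-\theta_i+\alpha)=\cos\alpha\sin(\theta_l-\theta_i)+\sin\alpha\cos(\theta_l-\theta_i)$ splits it into (i) a genuinely dissipative part $-\tfrac{N\cos\alpha}{2}\sum_{i,l}a_{il}(\theta_l-\theta_i)\sin(\theta_l-\theta_i)$, which I would bound above using $x\sin x\ge\tfrac{\sin D_0}{D_0}x^2$ for $|x|\le D_0<\pi$ and the equivalence of edge sums and full sums from Lemma \ref{pd_equiv}, and (ii) a frustration remainder proportional to $\sin\alpha$. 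The natural-frequency term $\sum_{i,j}(\theta_i-\theta_j)(\Omega_i-\Omega_j)$ would be handled by Young's inequality, producing a controlled $D_\Omega/\sqrt K$ contribution. Since $m\sqrt K<\tfrac18$ by \eqref{Con_14}, completing the square gives coercivity $\mathcal{E}_1\ge(\tfrac12-m\sqrt K)\sum_{i,j}(\theta_i-\theta_j)^2\ge\tfrac38 D(\theta)^2$, while \eqref{Con_12}, \eqref{Con_13}, \eqref{Con_14}, \eqref{Con_15} keep the dissipation dominant; this yields $\tfrac{d}{dt}\mathcal{E}_1\le-c\sqrt K\,\mathcal{E}_1+(\text{source})$ as long as $D(\theta)<D_0$. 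A continuity argument based on $\mathcal{E}_1(0)<\tfrac18 D_0^2$ and the coercivity constant $\tfrac38>\tfrac18$ propagates $D(\theta)<D_0$ for all time, and integrating then gives $\mathcal{E}_1(t)\le\mathcal{E}_1(0)e^{-c\sqrt K t}+C_1$ with $C_1$ the quantity in \eqref{Con_16}. Because $C_1<\tfrac1{16}(D^\infty)^2$, there is a finite $t_*$ with $D(\theta(t))\le D^\infty<\tfrac\pi2$ for all $t\ge t_*$.

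\medskip

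\textbf{Stage 2: exponential frequency synchronization.} On $[t_*,\infty)$ I would differentiate \eqref{sp_dynamic} once more; writing $y_i:=\dot\omega_i$ gives $m\dot y_i+y_i=\tfrac{K}{N}\sum_l a_{il}\cos(\theta_l-\theta_i+\alpha)(\omega_l-\omega_i)$, in which the natural frequencies have disappeared, so the system governing the frequency differences is homogeneous. I would then study $\mathcal{E}_2$, built from $(\omega_i-\omega_j)$ and $(y_i-y_j)$ exactly as $\mathcal{E}_1$ is built from $(\theta_i-\theta_j)$ and $(\omega_i-\omega_j)$. Differentiating $\mathcal{E}_2$ and symmetrizing the coupling term in $(i,l)$ as in Stage 1 yields a dissipative part $-\tfrac{\cos\alpha}{2}\sum_{i,l}a_{il}\cos(\theta_l-\theta_i)(\omega_i-\omega_l)^2$, which by the confinement $D(\theta(t))\le D^\infty$ and Lemma \ref{pd_equiv} is at most $-\tfrac{a_l\cos\alpha\cos D^\infty}{2}\Lambda_1\sum_{i,j}(\omega_i-\omega_j)^2$, plus a frustration remainder that is now \emph{quadratic} in the frequency differences (each factor being $O(D(\omega))$) and hence, under \eqref{Con_12}, can be absorbed into the dissipation; $m\sqrt K<\tfrac18$ and the $mK$-smallness in \eqref{Con_14} control the cross-terms and keep $\mathcal{E}_2$ coercive, $\mathcal{E}_2\ge c'D(\omega)^2$. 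This produces $\tfrac{d}{dt}\mathcal{E}_2\le-\sqrt K\,\mathcal{E}_2$ on $[t_*,\infty)$, so $\mathcal{E}_2(t)\le\mathcal{E}_2(t_*)e^{-\sqrt K(t-t_*)}$ and therefore $D(\omega(t))\le C\,e^{-\frac{\sqrt K}{2}(t-t_*)}$ with $C$ a fixed multiple of $\sqrt{\mathcal{E}_2(t_*)}$; the finiteness of $\mathcal{E}_2(t_*)$ follows from an a priori bound on $\omega_i$ and $\dot\omega_i$, obtained by solving the damped linear ODEs for $\omega_i$ and for $y_i$, whose right-hand sides are bounded by $|\Omega_i|+Ka_u$ and by $Ka_u D(\omega)$ respectively.

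\medskip

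\textbf{Expected main obstacle.} The crux is the frustration term. In Stage 2 it enters quadratically in frequency differences and is harmless once $\alpha$ is small, but in Stage 1 it contributes a term that is only \emph{linear} in the phase differences — for a general, non-regular network there is no cancellation available, and there is no gradient-flow structure to invoke — so it cannot be absorbed into the phase dissipation and instead acts, together with the $D_\Omega$ term, as a persistent source of magnitude $C_1$. This is exactly why $D(\theta)$ is only kept small rather than driven to zero, and it is what forces the delicate bookkeeping in Assumption $(\mathcal{A})$: one must take $m$ small, $\alpha$ small and $K$ large while keeping $m\sqrt K$, $mK$ and $K\sin\alpha$ all small, so that simultaneously (a) every bad term in $\tfrac{d}{dt}\mathcal{E}_1$ and $\tfrac{d}{dt}\mathcal{E}_2$ is either absorbed into a dissipation or bounded by $C_1$, (b) the coercivity constants of $\mathcal{E}_1$ and $\mathcal{E}_2$ remain positive, and (c) $C_1<\tfrac1{16}(D^\infty)^2$. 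A secondary difficulty, already flagged in the introduction, is that the phase diameter is only Lipschitz (no well-ordering), so one cannot work with a scalar second-order ODE for $D(\theta)$; the quadratic functional $\mathcal{E}_1$ is precisely the device that circumvents this, at the cost of the more involved estimates above.
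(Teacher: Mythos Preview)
Your two-stage plan is correct and matches the paper's proof exactly: the paper combines Lemma~\ref{p_bound} (phase confinement via the differential inequality $\dot{\mathcal{E}}_1\le\sqrt K\,C_1-\sqrt K\,\mathcal{E}_1$, obtained through Lemmas~\ref{E1_p}--\ref{func1_dynamic}) with Lemma~\ref{f_expo} (the homogeneous inequality $\dot{\mathcal{E}}_2\le-\sqrt K\,\mathcal{E}_2$ on $[t_*,\infty)$, via Lemmas~\ref{E2_f}--\ref{E2_dotf}), and your identification of the frustration term as a persistent source in Stage~1 versus an absorbable quadratic term in Stage~2 is precisely the mechanism the paper exploits. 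The only cosmetic difference is that you symmetrize the coupling sums by introducing the phase average $\theta_c$, whereas the paper does the equivalent computation by explicit index exchanges $(i,j,l)\mapsto(i,l,j)$ etc.; both routes land on the same identity $-2K\cos\alpha\sum_{(j,i)\in E}a_{ji}\sin(\theta_i-\theta_j)(\theta_i-\theta_j)$ and its frequency analogue.
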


\section{Uniformly small boundedness of phase diameter}\label{sec:3}
\setcounter{equation}{0}

We will show that the Kuramoto oscillators \eqref{sp_dynamic} are going to be trapped into a small region confined in a quarter circle under Assumption $(\mathcal{A})$. More precisely, we first introduce some basic estimates on the energy functional $\mathcal{E}_1$. Then, we provide the first-order Gronwall-type inequality to derive the desired results.

\subsection{Estimates on the energy functional $\mathcal{E}_1(t)$} Recall the energy functional defined in \eqref{p_functional}. The following lemma shows that the phase diameter can be bounded by $\mathcal{E}_1(t)$.
\begin{lemma}\label{p_positive}
Suppose the inertia and coupling strength satisfy
\begin{equation}\label{Con_1}
m\sqrt{K} < \frac{1}{8}.
\end{equation}
Then for any $t \ge 0$, we have
\begin{align*}
\mathcal{E}_1(t) \ge \frac{1}{8} \sum_{i=1}^N\sum_{j=1}^N(\theta_i(t) - \theta_j(t))^2 + \frac{2}{3} m^2 \sum_{i=1}^N \sum_{j=1}^N (\omega_i(t) - \omega_j(t))^2.
\end{align*}
Moreover, we have
\begin{align*}
D(\theta(t)) \le  \sqrt{8 \mathcal{E}_1(t)}, \quad t \ge 0.
\end{align*}
\end{lemma}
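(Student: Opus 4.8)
The plan is to establish the lower bound pairwise and then sum over all pairs. For fixed $i,j$ abbreviate $x := \theta_i(t)-\theta_j(t)$ and $y := \omega_i(t)-\omega_j(t)$, so that
\[
\mathcal{E}_1(t) = \sum_{i,j=1}^N q\big(\theta_i-\theta_j,\ \omega_i-\omega_j\big), \qquad q(x,y) := (1-m\sqrt K)x^2 + 2mxy + 2m^2 y^2 .
\]
Thus it suffices to show that, under \eqref{Con_1},
\[
q(x,y)\ \ge\ \tfrac18 x^2 + \tfrac23 m^2 y^2 \qquad \text{for all } x,y\in\bbr .
\]
This is exactly the statement that the $2\times2$ symmetric quadratic form $q(x,y)-\tfrac18x^2-\tfrac23 m^2y^2 = \big(\tfrac78-m\sqrt K\big)x^2 + 2mxy + \tfrac43 m^2 y^2$ is positive semidefinite, whose determinant condition $\big(\tfrac78-m\sqrt K\big)\cdot\tfrac43 \ge 1$ is precisely $m\sqrt K\le \tfrac18$; this is where the constant $1/8$ comes from.

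Concretely I would argue via Young's inequality: since $m>0$,
\[
2mxy \ \ge\ -\tfrac34 x^2 - \tfrac43 m^2 y^2 ,
\]
hence
\[
q(x,y) \ \ge\ \Big(1-m\sqrt K-\tfrac34\Big)x^2 + \Big(2-\tfrac43\Big)m^2 y^2 \ =\ \Big(\tfrac14-m\sqrt K\Big)x^2 + \tfrac23 m^2 y^2 .
\]
By \eqref{Con_1} one has $\tfrac14-m\sqrt K > \tfrac14-\tfrac18 = \tfrac18$, which yields the pairwise bound; summing over all $1\le i,j\le N$ gives the first claimed inequality. (One could equivalently complete the square as $\big(\tfrac78-m\sqrt K\big)\big(x+\tfrac{my}{\,7/8-m\sqrt K\,}\big)^2+\big(\tfrac43-\tfrac{1}{\,7/8-m\sqrt K\,}\big)m^2y^2$, both coefficients being nonnegative when $m\sqrt K\le\tfrac18$.)

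For the diameter estimate I would simply discard the nonnegative term $\tfrac23 m^2\sum_{i,j}(\omega_i-\omega_j)^2$ to obtain $\mathcal{E}_1(t)\ge \tfrac18\sum_{i,j}(\theta_i-\theta_j)^2$, and then note that if $p,q$ realize the maximum defining $D(\theta(t))$ then $D(\theta(t))^2=(\theta_p-\theta_q)^2\le\sum_{i,j}(\theta_i-\theta_j)^2\le 8\mathcal{E}_1(t)$, i.e. $D(\theta(t))\le\sqrt{8\mathcal{E}_1(t)}$. There is essentially no serious obstacle here: the whole content is the elementary positive-definiteness check above, and the fact that the weight $3/4$ in Young's inequality is sharp is exactly what pins the argument to the threshold $m\sqrt K<1/8$.
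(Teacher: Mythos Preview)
Your proof is correct and essentially identical to the paper's: the paper also uses the Young-type inequality $2m(\theta_i-\theta_j)(\omega_i-\omega_j)\ge-\tfrac34(\theta_i-\theta_j)^2-\tfrac43 m^2(\omega_i-\omega_j)^2$ to reach $\mathcal{E}_1(t)\ge(\tfrac14-m\sqrt K)\sum(\theta_i-\theta_j)^2+\tfrac23 m^2\sum(\omega_i-\omega_j)^2$ and then invokes $m\sqrt K<\tfrac18$, followed by the same discarding step for the diameter bound. Your extra remarks on the determinant and completing the square are accurate but not needed.
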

\begin{proof}
According to the formula
\begin{equation}\label{formula1}
2|x||y| \le x^2 + y^2, \quad x, y \in \mathbb{R},
\end{equation}
we have
\begin{equation*}
2m \sum_{i=1}^N \sum_{j=1}^N (\theta_i - \theta_j) (\omega_i - \omega_j)  \ge - \sum_{i=1}^N \sum_{j=1}^N \left[\frac{3}{4} (\theta_i - \theta_j)^2 + \frac{4}{3} m^2 (\omega_i - \omega_j)^2\right].
\end{equation*}
Therefore, we see from \eqref{p_functional} that
\begin{equation}\label{energy_pf}
\begin{aligned}
\mathcal{E}_1(t) &\ge \left(  \frac{1}{4} - m\sqrt{K} \right) \sum_{i=1}^N\sum_{j=1}^N(\theta_i - \theta_j)^2 + \frac{2}{3} m^2 \sum_{i=1}^N \sum_{j=1}^N (\omega_i - \omega_j)^2\\
&\ge \frac{1}{8} \sum_{i=1}^N\sum_{j=1}^N(\theta_i - \theta_j)^2 + \frac{2}{3} m^2 \sum_{i=1}^N \sum_{j=1}^N (\omega_i - \omega_j)^2,
\end{aligned}
\end{equation}
which complete the proof of the first part.

Moreover, it follows from \eqref{energy_pf} that
\begin{align*}
\mathcal{E}_1(t) \ge  \frac{1}{8} \sum_{i=1}^N\sum_{j=1}^N(\theta_i(t) - \theta_j(t))^2.
\end{align*}
This yields
\begin{align*}
(\theta_i(t) - \theta_j(t))^2 \le 8 \mathcal{E}_1(t).
\end{align*}
Thus, we obtain
\begin{align*}
D(\theta(t)) = \max_{1 \le i,j \le N} |\theta_i(t) - \theta_j(t)| \le \sqrt{8 \mathcal{E}_1(t)}.
\end{align*}
\end{proof}

\subsection{Differential inequality for $\mathcal{E}_1(t)$} In this part, we derive a Gronwall-type differential inequality for $\mathcal{E}_1$ along the flow \eqref{sp_dynamic}. Before this, we provide some a priori estimates.
\begin{lemma}\label{E1_p}
Suppose the graph $\mathcal{G}$ is symmetric and connected, and the phase configuration $\theta(t)=(\theta_1(t), \ldots,\theta_N(t))$ to system \eqref{sp_dynamic} at time $t$ satisfies
\begin{equation*}
D(\theta(t)) = \max_{1\le i,j \le N} |\theta_i(t) - \theta_j(t)| < D_0.
\end{equation*}
Then, we have the following estimates:

\begin{align}
&\sum_{i=1}^N\sum_{j=1}^N (\Omega_i - \Omega_j) \cdot 2(\theta_i - \theta_j) \label{CC-25}\\
&\le \frac{3(1+|E^c|r)D_0}{Ka_l \cos \alpha \sin D_0} \sum_{i=1}^N\sum_{j=1}^N (\Omega_i - \Omega_j)^2 + \frac{1}{3}Ka_l \cos \alpha  \frac{\sin D_0}{D_0}  \frac{1}{1 + |E^c| r} \sum_{i=1}^N\sum_{j=1}^N (\theta_i - \theta_j)^2,\notag\\
&\notag\\
&\frac{K}{N}\sin \alpha \sum_{i=1}^N\sum_{j=1}^N \sum_{l=1}^N [ a_{il} \cos (\theta_l - \theta_i) - a_{jl} \cos (\theta_l - \theta_j)] \cdot 2(\theta_i - \theta_j) \label{CC-26}\\
&\le  \frac{12N^2 K a_u^2 \sin^2 \alpha (1+|E^c|r)D_0}{a_l \cos \alpha \sin D_0} + \frac{1}{3}Ka_l \cos \alpha  \frac{\sin D_0}{D_0}  \frac{1}{1 + r|E^c| } \sum_{i=1}^N\sum_{j=1}^N (\theta_i - \theta_j)^2,\notag\\
&\notag\\
&\frac{K}{N}\cos \alpha \sum_{i=1}^N\sum_{j=1}^N \sum_{l=1}^N [ a_{il} \sin (\theta_l - \theta_i) - a_{jl} \sin (\theta_l - \theta_j)] \cdot 2(\theta_i - \theta_j) \label{CC-9}\\
&\le-2Ka_l \cos \alpha  \frac{\sin D_0}{D_0}  \frac{1}{1 + r|E^c| } \sum _{i=1}^N \sum_{j=1}^N |\theta_i - \theta_j|^2.\notag
\end{align}

\end{lemma}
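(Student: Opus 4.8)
The three bounds split into two categories: \eqref{CC-25} and \eqref{CC-26} follow from elementary convexity together with crude trigonometric estimates, whereas \eqref{CC-9} is the genuine dissipation inequality and is the analytic core of the lemma. I would treat the two easy ones first and spend most of the effort on the third.

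For \eqref{CC-25}, the plan is to apply Young's inequality $2|x||y|\le \beta^{-1}x^2+\beta y^2$ termwise to $x=\Omega_i-\Omega_j$, $y=\theta_i-\theta_j$ with the weight $\beta=\tfrac{1}{3}Ka_l\cos\alpha\,\tfrac{\sin D_0}{D_0}\,\tfrac{1}{1+|E^c|r}$, and then sum over $i,j$; since $\beta^{-1}=\tfrac{3(1+|E^c|r)D_0}{Ka_l\cos\alpha\sin D_0}$ this is exactly the asserted inequality. For \eqref{CC-26}, I would first discard all trigonometric content: using $|\cos(\cdot)|\le 1$ and $0\le a_{il}\le a_u$ (with $a_{il}=0$ if $(i,l)\notin E$) one gets $|a_{il}\cos(\theta_l-\theta_i)-a_{jl}\cos(\theta_l-\theta_j)|\le 2a_u$, and the inner sum over $l$ contributes a factor $N$, so the left-hand side is $\le 4Ka_u\sin\alpha\sum_{i,j}|\theta_i-\theta_j|$. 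A second use of Young's inequality with the same $\beta$, in the form $2(2Ka_u\sin\alpha)|\theta_i-\theta_j|\le \beta^{-1}(2Ka_u\sin\alpha)^2+\beta|\theta_i-\theta_j|^2$, summed over the $N^2$ pairs $(i,j)$, then produces precisely the stated right-hand side.

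The substantive step is \eqref{CC-9}. Set $T:=\sum_{i,j,l}\big[a_{il}\sin(\theta_l-\theta_i)-a_{jl}\sin(\theta_l-\theta_j)\big](\theta_i-\theta_j)$, so that the left side of \eqref{CC-9} equals $\tfrac{2K\cos\alpha}{N}T$. First I would relabel $i\leftrightarrow j$ in the terms carrying $a_{jl}$ to obtain $T=2\sum_{i,j,l}a_{il}\sin(\theta_l-\theta_i)(\theta_i-\theta_j)$. Performing the $j$-summation via $\sum_j(\theta_i-\theta_j)=N\theta_i-\sum_j\theta_j$ and noting $\sum_{i,l}a_{il}\sin(\theta_l-\theta_i)=0$ (symmetry $a_{il}=a_{li}$, oddness of $\sin$) kills the mean term, leaving $T=2N\sum_{i,l}a_{il}\sin(\theta_l-\theta_i)\theta_i$. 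A further symmetrization in $(i,l)$ gives $T=-N\sum_{(i,l)\in E}a_{il}(\theta_l-\theta_i)\sin(\theta_l-\theta_i)$. Since $D(\theta(t))<D_0<\pi$, every argument $\theta_l-\theta_i$ has modulus $<D_0$, so the elementary bound $x\sin x\ge \tfrac{\sin D_0}{D_0}x^2$ for $|x|\le D_0$ (decrease of $\tfrac{\sin x}{x}$ on $[0,\pi]$) together with $a_{il}\ge a_l$ on $E$ yields $T\le -Na_l\tfrac{\sin D_0}{D_0}\sum_{(i,l)\in E}(\theta_i-\theta_l)^2$. Finally Lemma~\ref{pd_equiv} gives $\sum_{(i,l)\in E}(\theta_i-\theta_l)^2\ge\tfrac{1}{1+r|E^c|}\sum_{i,j}(\theta_i-\theta_j)^2$, and multiplying by $\tfrac{2K\cos\alpha}{N}$ produces \eqref{CC-9}.

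I expect the repeated symmetrization in \eqref{CC-9} to be the delicate point: both index swaps must be justified from the standing symmetry $a_{il}=a_{li}$, and it is precisely the hypothesis $D(\theta(t))<D_0<\pi$ that keeps $x\mapsto x\sin x$ bounded below by a positive multiple of $x^2$, which is what turns $T$ into a true dissipation term. The remaining two inequalities are bookkeeping plus two applications of Young's inequality, so I anticipate no real obstacle there.
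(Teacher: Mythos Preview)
Your proposal is correct and follows the same overall strategy as the paper: Young's inequality with the weight $\beta=\tfrac{1}{3}Ka_l\cos\alpha\,\tfrac{\sin D_0}{D_0}\,\tfrac{1}{1+|E^c|r}$ for \eqref{CC-25} and \eqref{CC-26}, and for \eqref{CC-9} a symmetrization that collapses the triple sum to $-2K\cos\alpha\sum_{i,j}a_{ji}\sin(\theta_i-\theta_j)(\theta_i-\theta_j)$, followed by the bound $x\sin x\ge\tfrac{\sin D_0}{D_0}x^2$ and Lemma~\ref{pd_equiv}. The only cosmetic difference is that the paper reaches the same identity for $\mathcal{I}_2$ by splitting the $l$-sum into $l\in\{i,j\}$ and $l\notin\{i,j\}$ before swapping indices, whereas your two-step symmetrization (first $i\leftrightarrow j$, then sum over $j$ and symmetrize in $(i,l)$) is a bit more direct; either route is fine.
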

\begin{proof} 
%
We will only show the proof of \eqref{CC-9}, since \eqref{CC-25} and \eqref{CC-26} are simply derived by applying \eqref{formula1}. For convenience, we set
\begin{equation*}
\mathcal{I}_2 = \frac{K}{N}\cos \alpha \sum_{i=1}^N\sum_{j=1}^N \sum_{l=1}^N [ a_{il} \sin (\theta_l - \theta_i) - a_{jl} \sin (\theta_l - \theta_j)] \cdot 2(\theta_i - \theta_j).
\end{equation*}
Now for $\mathcal{I}_2$, applying the symmetric property of the network, we have the following reduction:
\begin{equation}\label{D-1}
\begin{aligned} 
\mathcal{I}_2 
&= \frac{K}{N}\cos \alpha \sum^N_{\substack{i,j=1 \\ i \ne j}} \sum_{l=1}^N [ a_{il} \sin (\theta_l - \theta_i) - a_{jl} \sin (\theta_l - \theta_j)] \cdot 2(\theta_i - \theta_j) \\
&= \frac{K}{N}\cos \alpha \left\{ \sum^N_{\substack{i,j=1 \\ i \ne j}} \sum_{l = i,j} + \sum^N_{\substack{i,j=1 \\ i \ne j}} \sum^N_{\substack{l=1 \\ l \ne i,j}}\right\} [ a_{il} \sin (\theta_l - \theta_i) - a_{jl} \sin (\theta_l - \theta_j)] \cdot 2(\theta_i - \theta_j) \\
&= - \frac{2K}{N} \cos \alpha \sum^N_{\substack{i,j=1 \\ i \ne j}} a_{ji} \sin (\theta_i - \theta_j) \cdot 2(\theta_i - \theta_j) \\
&+\frac{K}{N}\cos \alpha  \sum^N_{\substack{i,j=1 \\ i \ne j}} \sum^N_{\substack{l=1 \\ l \ne i,j}} [a_{il} \sin (\theta_l - \theta_i)] \cdot 2(\theta_i - \theta_j) + \frac{K}{N}\cos \alpha  \sum^N_{\substack{i,j=1 \\ i \ne j}} \sum^N_{\substack{l=1 \\ l \ne i,j}} [- a_{jl} \sin (\theta_l - \theta_j)] \cdot 2(\theta_i - \theta_j).
\end{aligned}
\end{equation}
For the last two terms in above equality, we exchange the indices to see that
\begin{equation*}
\begin{aligned}
&\frac{K}{N}\cos \alpha  \sum^N_{\substack{i,j=1 \\ i \ne j}} \sum^N_{\substack{l=1 \\ l \ne i,j}} [a_{il} \sin (\theta_l - \theta_i)] \cdot 2(\theta_i - \theta_j) =\frac{K}{N}\cos \alpha \sum^N_{\substack{i,l=1 \\ i \ne l}} \sum^N_{\substack{j=1 \\  j \ne i,l}} [a_{ij} \sin (\theta_j - \theta_i)] \cdot 2(\theta_i - \theta_l),
\end{aligned}
\end{equation*}
and
\begin{equation*}
\begin{aligned}
&\frac{K}{N}\cos \alpha  \sum^N_{\substack{i,j=1 \\ i \ne j}} \sum^N_{\substack{l=1 \\ l \ne i,j}} [- a_{jl} \sin (\theta_l - \theta_j)] \cdot 2(\theta_i - \theta_j)= \frac{K}{N}\cos \alpha \sum^N_{\substack{l,j=1 \\ l \ne j}}   \sum^N_{\substack{i=1 \\ i \ne l,j}} [-a_{ji} \sin (\theta_i - \theta_j)] \cdot 2(\theta_l - \theta_j).
\end{aligned}
\end{equation*}
This implies that 
\begin{equation}\label{D-2}
\begin{aligned}
&\frac{K}{N}\cos \alpha  \sum^N_{\substack{i,j=1 \\ i \ne j}} \sum^N_{\substack{l=1 \\ l \ne i,j}} [a_{il} \sin (\theta_l - \theta_i)] \cdot 2(\theta_i - \theta_j) + \frac{K}{N}\cos \alpha  \sum^N_{\substack{i,j=1 \\ i \ne j}} \sum^N_{\substack{l=1 \\ l \ne i,j}} [- a_{jl} \sin (\theta_l - \theta_j)] \cdot 2(\theta_i - \theta_j)\\
&= - \frac{K}{N} \cos \alpha \sum^N_{\substack{i,l=1 \\ i \ne l}} \sum^N_{\substack{j=1 \\  j \ne i,l}} [a_{ji} \sin (\theta_i - \theta_j)] \cdot 2(\theta_i - \theta_l) -\frac{K}{N}\cos \alpha \sum^N_{\substack{l,j=1 \\ l \ne j}}   \sum^N_{\substack{i=1 \\ i \ne l,j}} [a_{ji} \sin (\theta_i - \theta_j)] \cdot 2(\theta_l - \theta_j) \\
&= - \frac{2K}{N}\cos \alpha  \sum^N_{\substack{i,j=1 \\ i\ne j}} \sum^N_{\substack{l=1 \\ l \ne i,j}} a_{ji} \sin (\theta_i - \theta_j)(\theta_i - \theta_j)=- \frac{2(N-2)K}{N}\cos \alpha \sum^N_{\substack{i,j=1\\ i \ne j}} a_{ji} \sin (\theta_i - \theta_j) (\theta_i - \theta_j), 
\end{aligned}
\end{equation}
where we used the equivalence relation between the following indices 
\[\sum^N_{\substack{i,l=1 \\ i \ne l}} \sum^N_{\substack{j=1 \\  j \ne i,l}}, \quad \sum^N_{\substack{l,j=1 \\ l \ne j}}   \sum^N_{\substack{i=1 \\ i \ne l,j}}, \quad \sum^N_{\substack{i,j=1 \\ i\ne j}} \sum^N_{\substack{l=1 \\ l \ne i,j}}.\]
Then, we combine \eqref{D-1}, \eqref{D-2} and Lemma \ref{pd_equiv} to obtain
\begin{equation*}
\begin{aligned}
\mathcal{I}_2 &= - \frac{2K}{N} \cos \alpha \sum^N_{\substack{i,j=1 \\ i \ne j}} a_{ji} \sin (\theta_i - \theta_j) \cdot 2(\theta_i - \theta_j) - \frac{2(N-2)K}{N}\cos \alpha \sum^N_{\substack{i,j=1\\ i \ne j}} a_{ji} \sin (\theta_i - \theta_j) (\theta_i - \theta_j) \\
&= -2K \cos \alpha \sum^N_{i=1} \sum_{j=1}^N a_{ji} \sin (\theta_i - \theta_j) (\theta_i - \theta_j) = -2K \cos \alpha \sum_{(j,i) \in E}  a_{ji} \sin (\theta_i - \theta_j) (\theta_i - \theta_j) \\
&\le -2K \cos \alpha \sum_{(j,i) \in E} a_l \frac{\sin D_0}{D_0} |\theta_i - \theta_j|^2 \le -2Ka_l \cos \alpha  \frac{\sin D_0}{D_0}  \frac{1}{1 + |E^c| r} \sum _{i=1}^N \sum_{j=1}^N |\theta_i - \theta_j|^2,
\end{aligned}
\end{equation*}
where we exploit the decreasing property of function $\frac{\sin x}{x}$ in $x \in (0,\pi]$.
\end{proof}
Lemma \ref{E1_p} focuses on the estimates of relative phases. As \eqref{sp_dynamic} is a second order system, we also need estimates of relative frequencies in the following lemma. 

\begin{lemma}\label{E1_f}
Suppose the graph $\mathcal{G}$ is symmetric and connected, and let $(\theta(t),\omega(t))$ be a solution to system \eqref{sp_dynamic}.
Then, we have the following estimates:

\begin{align}
&\sum_{i=1}^N \sum_{j=1}^N 4m (\Omega_i - \Omega_j) \cdot (\omega_i - \omega_j)
\le 12m \sum_{i=1}^N \sum_{j=1}^N (\Omega_i - \Omega_j)^2 + \frac{m}{3} \sum_{i=1}^N \sum_{j=1}^N (\omega_i - \omega_j)^2,\label{D-3}\\
&\notag\\
&\frac{2mK}{N}\sin \alpha \sum_{i=1}^N \sum_{j=1}^N\sum_{l=1}^N [ a_{il} \cos (\theta_l - \theta_i) - a_{jl} \cos (\theta_l - \theta_j)] \cdot  2(\omega_i - \omega_j)\label{D-4}\\
&\le 48N^2m K^2 a_u^2 \sin^2 \alpha + \frac{m}{3} \sum_{i=1}^N \sum_{j=1}^N (\omega_i - \omega_j)^2,\notag\\
&\notag\\
&\frac{2mK}{N}\cos \alpha \sum_{i=1}^N \sum_{j=1}^N\sum_{l=1}^N [ a_{il} \sin (\theta_l - \theta_i) - a_{jl} \sin (\theta_l - \theta_j)] \cdot 2(\omega_i - \omega_j)\label{D-5}\\
&\le 12mK^2a_u^2 \cos^2 \alpha \sum_{i=1}^N \sum_{j=1}^N (\theta_i - \theta_j)^2 + \frac{m}{3} \sum_{i=1}^N \sum_{j=1}^N (\omega_i - \omega_j)^2\notag.
\end{align}

\end{lemma}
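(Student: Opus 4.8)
The plan is to derive all three bounds from a single template. In each case the left-hand side is a sum over pairs $(i,j)$ of an interaction-or-source factor multiplied by the frequency difference $2(\omega_i-\omega_j)$, and I will dispatch every such product by the weighted elementary inequality behind \eqref{formula1}, namely $2|x||y|\le \tfrac1\epsilon x^2+\epsilon y^2$, with $\epsilon$ chosen so that the coefficient in front of $\sum_{i,j}(\omega_i-\omega_j)^2$ comes out \emph{exactly} $\frac{m}{3}$. The only difference among the three estimates is how the remaining, non-frequency factor is controlled.

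The first two estimates are routine. For \eqref{D-3} the source factor is simply $\Omega_i-\Omega_j$; applying $2ab\le 6a^2+\tfrac16 b^2$ pointwise to $4m(\Omega_i-\Omega_j)(\omega_i-\omega_j)$ yields $12m(\Omega_i-\Omega_j)^2+\frac{m}{3}(\omega_i-\omega_j)^2$, and summing over $i,j$ gives \eqref{D-3} verbatim. For \eqref{D-4} I use only that the cosine factor is \emph{uniformly bounded}: since $|\cos|\le1$ and $a_{il},a_{jl}\le a_u$, the inner sum obeys $\bigl|\sum_{l=1}^N[a_{il}\cos(\theta_l-\theta_i)-a_{jl}\cos(\theta_l-\theta_j)]\bigr|\le 2Na_u$. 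Factoring the $m$ out of the prefactor $\frac{2mK}{N}\sin\alpha$ and applying the weighted inequality (with the $\frac{m}{3}$ tuning, i.e. $\epsilon=\tfrac13$) converts the leftover into $3m\bigl(\tfrac{2K}{N}\sin\alpha\bigr)^2(2Na_u)^2=48mK^2a_u^2\sin^2\alpha$ per pair; summing over the $N^2$ pairs produces the constant $48N^2mK^2a_u^2\sin^2\alpha$. Crucially, no use of symmetry is needed here, which is exactly why the cosine estimate leaves a pure constant rather than a phase-dependent term.

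The estimate \eqref{D-5} is the crux, and I expect the symmetrization to be the main obstacle. A naive termwise bound fails: because $a_{il}\ne a_{jl}$ in general, the bracket $\sum_l[a_{il}\sin(\theta_l-\theta_i)-a_{jl}\sin(\theta_l-\theta_j)]$ cannot be controlled by $|\theta_i-\theta_j|$ alone (it need not vanish when $\theta_i=\theta_j$), so the crude route used for \eqref{D-4} would only yield a constant, not $\sum_{i,j}(\theta_i-\theta_j)^2$. The remedy is to first use the symmetry $a_{il}=a_{li}$ to collapse the triple sum. Expanding the bracket and separating the $j$- and $i$-independent pieces, the ``cross'' terms carry the factor $\sum_{i,l}a_{il}\sin(\theta_l-\theta_i)$, which is \emph{antisymmetric} under $i\leftrightarrow l$ and hence vanishes; the surviving diagonal part, after one further antisymmetrization in $(\omega_i,\omega_l)$, reduces to
\begin{equation*}
\frac{2mK}{N}\cos\alpha\sum_{i,j,l}[a_{il}\sin(\theta_l-\theta_i)-a_{jl}\sin(\theta_l-\theta_j)]\cdot 2(\omega_i-\omega_j)=4mK\cos\alpha\sum_{i=1}^N\sum_{l=1}^N a_{il}\sin(\theta_l-\theta_i)(\omega_i-\omega_l).
\end{equation*}
On this clean edge-type sum the weight-difference difficulty has disappeared, and I finish as before: restrict to $(i,l)\in E$ with $a_{il}\le a_u$, apply the Lipschitz bound $|\sin(\theta_l-\theta_i)|\le|\theta_l-\theta_i|$, and invoke the weighted inequality with $\lambda=6Ka_u\cos\alpha$ chosen to make the frequency coefficient $\frac{m}{3}$. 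This produces the phase term $12mK^2a_u^2\cos^2\alpha\sum_{i,l}(\theta_i-\theta_l)^2$ and completes \eqref{D-5}. Thus the real work is concentrated in the symmetry reduction that turns the obstructive double bracket into a single antisymmetric sum over edges; once that is in hand, all three estimates close by the same Young-type step.
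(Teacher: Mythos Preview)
Your proposal is correct and arrives at the same key identity as the paper, namely that the left-hand side of \eqref{D-5} collapses to $-4mK\cos\alpha\sum_{i,j}a_{ji}\sin(\theta_i-\theta_j)(\omega_i-\omega_j)$, after which both you and the paper finish via $|\sin x|\le|x|$ and the weighted Young inequality tuned to produce the $\tfrac{m}{3}$ coefficient. The treatment of \eqref{D-3} and \eqref{D-4} is identical: the paper also dismisses these as direct applications of \eqref{formula1}.

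The only genuine difference is \emph{how} the triple sum in \eqref{D-5} is reduced. The paper separates the inner $l$-sum into the cases $l\in\{i,j\}$ and $l\notin\{i,j\}$, and then performs two index exchanges on the ``off-diagonal'' piece to recombine everything into a single double sum (mirroring the manipulation in Lemma~\ref{E1_p}). Your route is more streamlined: you sum out the free index first, observe that the resulting cross terms are proportional to $\sum_{i,l}a_{il}\sin(\theta_l-\theta_i)$, which vanishes by the antisymmetry inherited from $a_{il}=a_{li}$, and then antisymmetrize once more in $(\omega_i,\omega_l)$. Your argument avoids the case split and makes the role of the graph symmetry more transparent; the paper's version has the advantage of reusing verbatim the index gymnastics already set up for the phase estimate. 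Either way the substance is the same.
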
 

\begin{proof} Similar to Lemma \ref{E1_p}, we will only prove \eqref{D-5}.
%
For notational simplicity, we set
\begin{equation*}
\mathcal{J}_2 = \frac{2mK}{N}\cos \alpha \sum_{i=1}^N \sum_{j=1}^N\sum_{l=1}^N [ a_{il} \sin (\theta_l - \theta_i) - a_{jl} \sin (\theta_l - \theta_j)] \cdot 2(\omega_i - \omega_j).
\end{equation*}
Next, we have the following reduction of $\mathcal{J}_2$:
\begin{equation}\label{D-6}
\begin{aligned}
\mathcal{J}_2 
&= \frac{2mK}{N}\cos \alpha \sum^N_{\substack{i,j=1 \\ i \ne j}}\sum_{l=1}^N [ a_{il} \sin (\theta_l - \theta_i) - a_{jl} \sin (\theta_l - \theta_j)] \cdot 2(\omega_i - \omega_j)\\
&= \frac{2mK}{N}\cos \alpha  \left\{ \sum^N_{\substack{i,j=1 \\ i \ne j}} \sum_{l = i,j}+ \sum^N_{\substack{i,j=1 \\ i \ne j}} \sum^N_{\substack{l=1 \\ l \ne i,j}}\right\} [ a_{il} \sin (\theta_l - \theta_i) - a_{jl} \sin (\theta_l - \theta_j)] \cdot 2(\omega_i - \omega_j)\\
&= - \frac{4mK}{N} \cos \alpha \sum^N_{\substack{i,j=1 \\ i \ne j}} a_{ji} \sin (\theta_i - \theta_j) \cdot 2(\omega_i - \omega_j)\\
& +\frac{2mK}{N}\cos \alpha \sum^N_{\substack{i,j=1 \\ i \ne j}} \sum^N_{\substack{l=1 \\ l \ne i,j}} [ a_{il} \sin (\theta_l - \theta_i)] \cdot 2(\omega_i - \omega_j)\\
&+\frac{2mK}{N}\cos \alpha \sum^N_{\substack{i,j=1 \\ i \ne j}} \sum^N_{\substack{l=1 \\ l \ne i,j}} [- a_{jl} \sin (\theta_l - \theta_j)] \cdot 2(\omega_i - \omega_j).
\end{aligned}
\end{equation}
Then, we may apply similar arguments in Lemma \ref{E1_p} to get 
\begin{equation*}
\begin{aligned}
\mathcal{J}_2 = -4mK \cos \alpha \sum_{i=1}^N \sum_{j=1}^N a_{ji} \sin (\theta_i - \theta_j) (\omega_i - \omega_j).
\end{aligned}
\end{equation*}
We further exploit the inequality $|\sin x| \le |x|$ to obtain
\begin{equation*}
\begin{aligned}
\mathcal{J}_2  
&\le 4mKa_u \cos \alpha \sum_{i=1}^N \sum_{j=1}^N |\theta_i(t) - \theta_j(t)| |\omega_i(t) - \omega_j(t)|\\
&= \sum_{i=1}^N \sum_{j=1}^N 2 \cdot 2mKa_u \cos \alpha \sqrt{\frac{3}{m}} |\theta_i - \theta_j| \cdot \sqrt{\frac{m}{3}} |\omega_i(t) - \omega_j(t)| \\
&= 12mK^2a_u^2 \cos^2 \alpha \sum_{i=1}^N \sum_{j=1}^N (\theta_i - \theta_j)^2 + \frac{m}{3} \sum_{i=1}^N \sum_{j=1}^N (\omega_i - \omega_j)^2,
\end{aligned}
\end{equation*}
which finishes the proof.
\end{proof}

Now we are ready to provide a differential inequality for energy functional $\mathcal{E}_1$ in the whole time interval.
\begin{lemma}\label{func1_dynamic}
Let $(\theta(t), \omega(t))$ be a solution to system \eqref{sp_dynamic} and \eqref{initial} and suppose the assumptions in \eqref{Con_11}, \eqref{Con_13}, \eqref{Con_14}, \eqref{Con_16} hold.
Then, we have
\begin{equation*}
\dot{\mathcal{E}}_1(t) \le \sqrt{K}C_1 - \sqrt{K} \mathcal{E}_1(t), \quad t \in [0,+\infty),
\end{equation*}
where $C_1$ is a positive constant given in \eqref{Con_16}:
\begin{equation*}
C_1= \left[ \frac{3(1+|E^c|r)D_0}{K^{\frac{3}{2}}a_l \cos \alpha \sin D_0}+ 12\frac{m}{K^{\frac{1}{2}}} \right] D_\Omega + 12N^2K^{\frac{1}{2}} a_u^2 \sin^2 \alpha \left[\frac{(1+|E^c|r)D_0}{a_l \cos \alpha \sin D_0}  + 4mK\right].
\end{equation*}

\end{lemma}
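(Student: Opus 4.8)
The plan is to differentiate $\mathcal{E}_1(t)$ along the flow \eqref{sp_dynamic} and show that every term except the dissipative one is either absorbed into $-\sqrt{K}\mathcal{E}_1(t)$ or bounded by the constant $\sqrt{K}C_1$. First I would compute $\dot{\mathcal{E}}_1(t)$ term by term. Using $\dot\theta_i=\omega_i$ and $m\dot\omega_i = -\omega_i+\Omega_i+\frac{K}{N}\sum_l a_{il}\sin(\theta_l-\theta_i+\alpha)$, and expanding $\sin(\theta_l-\theta_i+\alpha)=\cos\alpha\sin(\theta_l-\theta_i)+\sin\alpha\cos(\theta_l-\theta_i)$, the derivative of each of the three pieces of $\mathcal{E}_1$ produces: (i) from $2m\sum(\theta_i-\theta_j)(\omega_i-\omega_j)$, a term $2m\sum(\omega_i-\omega_j)^2$ plus $2\sum(\theta_i-\theta_j)\big(-(\omega_i-\omega_j)+(\Omega_i-\Omega_j)+\frac{K}{N}\sum_l[\cdots]\big)$; (ii) from $(1-m\sqrt{K})\sum(\theta_i-\theta_j)^2$, a term $2(1-m\sqrt{K})\sum(\theta_i-\theta_j)(\omega_i-\omega_j)$; (iii) from $2m^2\sum(\omega_i-\omega_j)^2$, after multiplying the equation by $\frac{4m}{\cdots}$, the term $\frac{4m}{1}\sum(\omega_i-\omega_j)\big(-(\omega_i-\omega_j)+(\Omega_i-\Omega_j)+\frac{K}{N}\sum_l[\cdots]\big)$ (using $2m\cdot 2m^2\dot\omega$ cancels the $m$ appropriately to give $-4m\sum(\omega_i-\omega_j)^2$ plus the rest). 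Collecting, the "good" dissipative terms are $-4m\sum(\omega_i-\omega_j)^2$ from (iii) and the phase-dissipation from the $\cos\alpha\,\sin$ interaction term in (i), which by \eqref{CC-9} of Lemma \ref{E1_p} gives $-2Ka_l\cos\alpha\frac{\sin D_0}{D_0}\frac{1}{1+r|E^c|}\sum(\theta_i-\theta_j)^2$; the remaining cross terms $-2\sum(\theta_i-\theta_j)(\omega_i-\omega_j)+2(1-m\sqrt{K})\sum(\theta_i-\theta_j)(\omega_i-\omega_j)=-2m\sqrt{K}\sum(\theta_i-\theta_j)(\omega_i-\omega_j)$ and $2m\sum(\omega_i-\omega_j)^2$ must be controlled.

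Next I would invoke Lemmas \ref{E1_p} and \ref{E1_f} to bound the $\Omega$-forcing terms and the $\sin\alpha\,\cos$ frustration terms: \eqref{CC-25} and \eqref{CC-26} handle $2\sum(\theta_i-\theta_j)(\Omega_i-\Omega_j)$ and $\frac{K}{N}\sin\alpha\sum\sum\sum[a_{il}\cos(\theta_l-\theta_i)-a_{jl}\cos(\theta_l-\theta_j)]\cdot2(\theta_i-\theta_j)$, each producing a piece of the form $\frac{1}{3}Ka_l\cos\alpha\frac{\sin D_0}{D_0}\frac{1}{1+r|E^c|}\sum(\theta_i-\theta_j)^2$ plus constants contributing to $C_1$; similarly \eqref{D-3}, \eqref{D-4}, \eqref{D-5} handle the frequency-side forcing, each producing $\frac{m}{3}\sum(\omega_i-\omega_j)^2$ plus constant/phase terms. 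Summing the three phase pieces gives $Ka_l\cos\alpha\frac{\sin D_0}{D_0}\frac{1}{1+r|E^c|}\sum(\theta_i-\theta_j)^2$, which leaves a net phase-dissipation of $-Ka_l\cos\alpha\frac{\sin D_0}{D_0}\frac{1}{1+r|E^c|}\sum(\theta_i-\theta_j)^2$; summing the three frequency pieces gives $m\sum(\omega_i-\omega_j)^2$, so combined with the $-4m\sum(\omega_i-\omega_j)^2$ and $+2m\sum(\omega_i-\omega_j)^2$ one still has a net $-m\sum(\omega_i-\omega_j)^2$. The smallness conditions \eqref{Con_13} (forcing $\sqrt{K}a_l\cos\alpha\frac{\sin D_0}{D_0}\frac{1}{1+r|E^c|}\ge 1$, so the phase dissipation dominates $-\sqrt{K}\sum(\theta_i-\theta_j)^2$) and \eqref{Con_14} (which controls the $mK^2a_u^2\cos^2\alpha\sum(\theta_i-\theta_j)^2$ leftover from \eqref{D-5} and keeps $m\sqrt K<\frac18$) are exactly what make the bookkeeping close.

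Then I would deal with the indefinite cross term $-2m\sqrt{K}\sum(\theta_i-\theta_j)(\omega_i-\omega_j)$ using \eqref{formula1}: $|2m\sqrt K\sum(\theta_i-\theta_j)(\omega_i-\omega_j)|\le m\sqrt K\,\delta\sum(\theta_i-\theta_j)^2+\frac{m\sqrt K}{\delta}\sum(\omega_i-\omega_j)^2$ for suitable $\delta$; since $m\sqrt K<\frac18$, this is a tiny fraction of both the phase and frequency dissipation already in hand, so it is absorbed. At this stage I would have
\[
\dot{\mathcal{E}}_1(t)\le \sqrt K C_1-\sqrt K\Big(\tfrac18\sum(\theta_i-\theta_j)^2+\tfrac{2}{3}m^2\sum(\omega_i-\omega_j)^2\Big),
\]
after choosing constants so the phase coefficient is at least $\frac{\sqrt K}{8}$ and the frequency coefficient at least $\frac{2\sqrt K}{3}m^2$ (the latter requires $m<1$, condition \eqref{Con_12}, so that $-m\sum(\omega_i-\omega_j)^2\le-\frac{2}{3}\sqrt K m^2\sum(\omega_i-\omega_j)^2$ whenever $\sqrt K m\le\frac{3}{2}$, which holds by $m\sqrt K<\frac18$). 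Finally, by Lemma \ref{p_positive} the bracketed quantity is exactly a lower bound for $\mathcal{E}_1(t)$, so $\dot{\mathcal{E}}_1(t)\le\sqrt K C_1-\sqrt K\mathcal{E}_1(t)$, as claimed. The main obstacle, and where care is genuinely needed, is the term-by-term matching of coefficients: one must verify that the assorted $\frac13$, $\frac{m}{3}$, $\frac{3}{4}$, $12$, $48$ constants coming out of Lemmas \ref{E1_p}--\ref{E1_f} combine so that the total phase dissipation survives at level $\ge\frac{\sqrt K}{8}$ and the frequency dissipation at level $\ge\frac{2}{3}\sqrt K m^2$ — this is purely the arithmetic of Assumption $(\mathcal{A})$, but it is easy to drop a factor. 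A secondary subtlety is that \eqref{CC-25}, \eqref{CC-26} are only valid when $D(\theta(t))<D_0$, so strictly one must either already know $D(\theta(t))<D_0$ holds for all $t$ (presumably via a continuity/bootstrap argument using Lemma \ref{p_positive} and $\mathcal{E}_1(0)<\frac18 D_0^2$) or state the inequality on the invariant set; I would note this and defer the trapping argument to Lemma \ref{p_bound}.
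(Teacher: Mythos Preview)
Your outline follows the right structure and correctly identifies Lemmas \ref{E1_p} and \ref{E1_f} as the workhorses, but the closing step contains a genuine error. You arrive at
\[
\dot{\mathcal{E}}_1(t)\le \sqrt K\, C_1-\sqrt K\Big(\tfrac18\sum_{i,j}(\theta_i-\theta_j)^2+\tfrac{2}{3}m^2\sum_{i,j}(\omega_i-\omega_j)^2\Big)
\]
and then invoke Lemma \ref{p_positive} to replace the bracket by $\mathcal{E}_1(t)$. But Lemma \ref{p_positive} says the bracket is a \emph{lower} bound for $\mathcal{E}_1$, so $-\sqrt{K}\cdot[\text{bracket}]\ge -\sqrt{K}\mathcal{E}_1$, and the inequality goes the wrong way. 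To conclude $\dot{\mathcal{E}}_1\le\sqrt{K}C_1-\sqrt{K}\mathcal{E}_1$ you would need the dissipation to dominate each term of $\mathcal{E}_1$ from above, including the indefinite cross term $2m\sum(\theta_i-\theta_j)(\omega_i-\omega_j)$ sitting inside $\mathcal{E}_1$. The paper does not absorb that cross term by Young's inequality at all; instead it notices that the cross term $-2m\sqrt{K}\sum(\theta_i-\theta_j)(\omega_i-\omega_j)$ appearing in $\dot{\mathcal{E}}_1$ is \emph{exactly} $-\sqrt{K}$ times the cross term in $\mathcal{E}_1$. Writing it as $-m\sqrt{K}\,\frac{d}{dt}\sum(\theta_i-\theta_j)^2$ and leaving it untouched on the right-hand side, one only has to compare the pure phase and frequency coefficients, and \eqref{Con_13}, \eqref{Con_14} force these to exceed $(1-m\sqrt{K})$ and $2m^2$ respectively. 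That matches $\mathcal{E}_1$ term by term and closes the argument without any loss.

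A second, structural issue: you cannot defer the bootstrap to Lemma \ref{p_bound}, because Lemma \ref{p_bound} \emph{uses} the conclusion of the present lemma on all of $[0,+\infty)$; deferring would be circular. The paper builds the continuity argument into this proof: set $T^*=\sup\{T:\mathcal{E}_1(t)<\tfrac18 D_0^2 \text{ on }[0,T)\}$, derive the differential inequality on $[0,T^*)$ (where $D(\theta)<D_0$ holds by Lemma \ref{p_positive}), integrate it together with \eqref{Con_16} to get $\mathcal{E}_1(T^*)<\tfrac18 D_0^2$, and conclude $T^*=+\infty$.
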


\begin{proof}
We prove by continuity argument and split the proof into four steps. 

\noindent $\bullet$ \textbf{Step 1:}
First, we define a set
\begin{equation*}
\mathcal{T} = \{T> 0 | \mathcal{E}_1(t) < \frac{1}{8}D_0^2 \ \text{for} \ 0 \le t < T\}.
\end{equation*}
Since $\mathcal{E}_1(0) < \frac{1}{8} D_0^2$ and $\mathcal{E}_1(t)$ is continuous, there exists a constant $\delta >0$ such that
\begin{equation*}
\mathcal{E}_1(t) < \frac{1}{8} D_0^2, \quad \text{for} \ 0 \le t < \delta.
\end{equation*}
Thus, the set $\mathcal{T}$ is nonempty and then we define $T^* = \sup \mathcal{T}$.
Next, we will verify $T^* = +\infty$ by contrary. In fact, suppose  $T^* < +\infty$, then we have
\begin{equation}\label{continuity}
\mathcal{E}_1(t) < \frac{1}{8} D_0^2, \ \forall \ 0 \le t < T^* \quad \text{and} \quad \mathcal{E}_1(T^*) = \frac{1}{8} D_0^2.
\end{equation}
Moreover, according to Lemma \ref{p_positive}, we see that
\begin{equation}\label{C-1-1}
D(\theta(t)) \le \sqrt{8\mathcal{E}_1(t)} < D_0 < \pi, \quad \text{for} \ t \in [0,T^*).
\end{equation}

\noindent $\bullet$ \textbf{Step 2:} It follows from system \eqref{sp_dynamic} that for any $1 \le i, j \le N$,
\begin{equation}\label{C-3}
\begin{aligned}
m (\ddot{\theta}_i - \ddot{\theta}_j) + (\dot{\theta}_i - \dot{\theta}_j)&= \Omega_i - \Omega_j + \frac{K}{N}\sin \alpha \sum_{l=1}^N [ a_{il} \cos (\theta_l - \theta_i) - a_{jl} \cos (\theta_l - \theta_j)] \\
&+ \frac{K}{N}\cos \alpha \sum_{l=1}^N [ a_{il} \sin (\theta_l - \theta_i) - a_{jl} \sin (\theta_l - \theta_j)].
\end{aligned}
\end{equation}
We multiply \eqref{C-3} by $2(\theta_i - \theta_j)$ to get
\begin{equation}\label{C-6}
\begin{aligned}
&m\left[ \frac{d^2}{dt^2} (\theta_i - \theta_j)^2 -  2(\dot{\theta}_i - \dot{\theta}_j)^2\right] + \frac{d}{dt} (\theta_i - \theta_j)^2 \\
&= (\Omega_i - \Omega_j) \cdot 2(\theta_i - \theta_j) + \frac{K}{N}\sin \alpha \sum_{l=1}^N [ a_{il} \cos (\theta_l - \theta_i) - a_{jl} \cos (\theta_l - \theta_j)] \cdot 2(\theta_i - \theta_j)\\
&+ \frac{K}{N}\cos \alpha \sum_{l=1}^N [ a_{il} \sin (\theta_l - \theta_i) - a_{jl} \sin (\theta_l - \theta_j)] \cdot 2(\theta_i - \theta_j).
\end{aligned}
\end{equation}
Moreover, we sum over $i, j$ on both sides of \eqref{C-6} to get
\begin{equation}\label{C-7}
\begin{aligned}
&m \frac{d^2}{dt^2} \sum_{i=1}^N\sum_{j=1}^N(\theta_i - \theta_j)^2 -  2m\sum_{i=1}^N\sum_{j=1}^N(\dot{\theta}_i - \dot{\theta}_j)^2 + \frac{d}{dt} \sum_{i=1}^N\sum_{j=1}^N(\theta_i - \theta_j)^2 \\
&=\sum_{i=1}^N\sum_{j=1}^N (\Omega_i - \Omega_j) \cdot 2(\theta_i - \theta_j) + \frac{K}{N}\sin \alpha \sum_{i=1}^N\sum_{j=1}^N \sum_{l=1}^N [ a_{il} \cos (\theta_l - \theta_i) - a_{jl} \cos (\theta_l - \theta_j)] \cdot 2(\theta_i - \theta_j)\\
&+ \frac{K}{N}\cos \alpha \sum_{i=1}^N\sum_{j=1}^N \sum_{l=1}^N [ a_{il} \sin (\theta_l - \theta_i) - a_{jl} \sin (\theta_l - \theta_j)] \cdot 2(\theta_i - \theta_j).\\
\end{aligned}
\end{equation}
Then combining \eqref{C-7} and the estimates \eqref{CC-25}, \eqref{CC-26}, \eqref{CC-9} in Lemma \ref{E1_p}, we have the following estimate for $1 \le t < T^*$,
\begin{equation}\label{D-7}
\begin{aligned}
&m \frac{d^2}{dt^2} \sum_{i=1}^N\sum_{j=1}^N(\theta_i - \theta_j)^2 -  2m\sum_{i=1}^N\sum_{j=1}^N(\omega_i - \omega_j)^2 + \frac{d}{dt} \sum_{i=1}^N\sum_{j=1}^N(\theta_i - \theta_j)^2 \\
&\le \frac{3(1+|E^c|r)D_0}{Ka_l \cos \alpha \sin D_0} \sum_{i=1}^N\sum_{j=1}^N (\Omega_i - \Omega_j)^2 + \frac{1}{3}Ka_l \cos \alpha  \frac{\sin D_0}{D_0}  \frac{1}{1 + r|E^c| } \sum_{i=1}^N\sum_{j=1}^N (\theta_i - \theta_j)^2\\
&+ \frac{12N^2 K a_u^2 \sin^2 \alpha (1+|E^c|r)D_0}{a_l \cos \alpha \sin D_0} + \frac{1}{3}Ka_l \cos \alpha  \frac{\sin D_0}{D_0}  \frac{1}{1 + r|E^c| } \sum_{i=1}^N\sum_{j=1}^N (\theta_i - \theta_j)^2\\
&-2Ka_l \cos \alpha  \frac{\sin D_0}{D_0}  \frac{1}{1 + r|E^c| } \sum _{i=1}^N \sum_{j=1}^N (\theta_i - \theta_j)^2\\
&= \frac{3(1+|E^c|r)D_0}{Ka_l \cos \alpha \sin D_0} D_\Omega + \frac{12N^2 K a_u^2 \sin^2 \alpha (1+|E^c|r)D_0}{a_l \cos \alpha \sin D_0} \\
&- \frac{4}{3}Ka_l \cos \alpha  \frac{\sin D_0}{D_0}  \frac{1}{1 + r|E^c| } \sum _{i=1}^N \sum_{j=1}^N (\theta_i - \theta_j)^2.
\end{aligned} 
\end{equation}

\noindent $\bullet$ \textbf{Step 3:}
Next, 
we multiply \eqref{C-3} by $2(\dot{\theta}_i - \dot{\theta}_j)$ to gain the dissipation of relative frequencies. Actually, we have
\begin{equation}\label{C-14}
\begin{aligned}
&m \frac{d}{dt} (\dot{\theta}_i - \dot{\theta}_j)^2 + 2 (\dot{\theta}_i - \dot{\theta}_j)^2\\
&= (\Omega_i - \Omega_j) \cdot 2(\dot{\theta}_i - \dot{\theta}_j) +  \frac{K}{N}\sin \alpha \sum_{l=1}^N [ a_{il} \cos (\theta_l - \theta_i) - a_{jl} \cos (\theta_l - \theta_j)] \cdot  2(\dot{\theta}_i - \dot{\theta}_j)\\
&+ \frac{K}{N}\cos \alpha \sum_{l=1}^N [ a_{il} \sin (\theta_l - \theta_i) - a_{jl} \sin (\theta_l - \theta_j)] \cdot 2(\dot{\theta}_i - \dot{\theta}_j).
\end{aligned}
\end{equation}
Recall $\dot{\theta}_i=\omega_i$. Then, we sum over $i,j$ in \eqref{C-14} and apply  the estimates \eqref{D-3}, \eqref{D-4}, \eqref{D-5} in Lemma \ref{E1_f} to obtain  
\begin{equation}\label{D-8}
\begin{aligned}
&2m^2 \frac{d}{dt} \sum_{i=1}^N \sum_{j=1}^N (\omega_i - \omega_j)^2 + 4m \sum_{i=1}^N \sum_{j=1}^N (\omega_i - \omega_j)^2\\
&\le 12m \sum_{i=1}^N \sum_{j=1}^N (\Omega_i - \Omega_j)^2 + \frac{m}{3} \sum_{i=1}^N \sum_{j=1}^N (\omega_i - \omega_j)^2+48N^2m K^2 a_u^2 \sin^2 \alpha + \frac{m}{3} \sum_{i=1}^N \sum_{j=1}^N (\omega_i - \omega_j)^2\\
&+12mK^2a_u^2 \cos^2 \alpha \sum_{i=1}^N \sum_{j=1}^N (\theta_i - \theta_j)^2 + \frac{m}{3} \sum_{i=1}^N \sum_{j=1}^N (\omega_i - \omega_j)^2\\
&= 12m D_\Omega + 48N^2m K^2 a_u^2 \sin^2 \alpha + 12mK^2a_u^2 \cos^2 \alpha \sum_{i=1}^N \sum_{j=1}^N (\theta_i - \theta_j)^2 + m\sum_{i=1}^N \sum_{j=1}^N (\omega_i - \omega_j)^2.
\end{aligned}
\end{equation}

\noindent \textbf{Step 4:} Now, we add \eqref{D-7} and \eqref{D-8} together to derive that for $0 \le t < T^*$,
\begin{equation}\label{D-9}
\begin{aligned}
&m \frac{d^2}{dt^2} \sum_{i=1}^N\sum_{j=1}^N(\theta_i - \theta_j)^2 -  2m\sum_{i=1}^N\sum_{j=1}^N(\omega_i - \omega_j)^2 + \frac{d}{dt} \sum_{i=1}^N\sum_{j=1}^N(\theta_i - \theta_j)^2\\
&+2m^2 \frac{d}{dt} \sum_{i=1}^N \sum_{j=1}^N (\omega_i - \omega_j)^2 + 4m \sum_{i=1}^N \sum_{j=1}^N (\omega_i - \omega_j)^2\\
&\le \frac{3(1+|E^c|r)D_0}{Ka_l \cos \alpha \sin D_0} D_\Omega + \frac{12N^2 K a_u^2 \sin^2 \alpha (1+|E^c|r)D_0}{a_l \cos \alpha \sin D_0} \\
&- \frac{4}{3}Ka_l \cos \alpha  \frac{\sin D_0}{D_0}  \frac{1}{1 + r|E^c| } \sum _{i=1}^N \sum_{j=1}^N (\theta_i - \theta_j)^2 +12m D_\Omega + 48N^2m K^2 a_u^2 \sin^2 \alpha \\
&+ 12mK^2a_u^2 \cos^2 \alpha \sum_{i=1}^N \sum_{j=1}^N (\theta_i - \theta_j)^2 + m\sum_{i=1}^N \sum_{j=1}^N (\omega_i - \omega_j)^2.\\
\end{aligned}
\end{equation}
Due to the assumption in $\eqref{Con_14}_2$, we have 
\begin{equation*}
\begin{aligned}
&\frac{1}{3}Ka_l \cos \alpha  \frac{\sin D_0}{D_0}  \frac{1}{1 + |E^c| r} - 12mK^2a_u^2 \cos^2 \alpha > 0.
\end{aligned}
\end{equation*}
Thus, \eqref{D-9} can be further estimated and rewritten as below,
\begin{equation}\label{D-10}
\begin{aligned}
&\frac{d}{dt} \left[ m \frac{d}{dt}\sum_{i=1}^N\sum_{j=1}^N(\theta_i - \theta_j)^2 + (1 - m\sqrt{K})\sum_{i=1}^N\sum_{j=1}^N(\theta_i - \theta_j)^2 + 2m^2 \sum_{i=1}^N \sum_{j=1}^N (\omega_i - \omega_j)^2  \right]\\
& \leq \left[ \frac{3(1+|E^c|r)D_0}{Ka_l \cos \alpha \sin D_0}+ 12m \right] D_\Omega + 12N^2K a_u^2 \sin^2 \alpha \left[\frac{(1+|E^c|r)D_0}{a_l \cos \alpha \sin D_0}  + 4mK\right] \\
&- \sqrt{K} \left[ m  \frac{d}{dt} \sum_{i=1}^N\sum_{j=1}^N(\theta_i - \theta_j)^2 + \sqrt{K}a_l \cos \alpha  \frac{\sin D_0}{D_0}  \frac{1}{1 + |E^c| r} \sum _{i=1}^N \sum_{j=1}^N (\theta_i - \theta_j)^2 \right.\\
&\left.\qquad\qquad+ \frac{m}{\sqrt{K}} \sum_{i=1}^N \sum_{j=1}^N (\omega_i - \omega_j)^2\right].\\
\end{aligned}
\end{equation}
Then owing to the assumptions in \eqref{Con_13} and $\eqref{Con_14}_2$, we know that
\begin{equation*}
\sqrt{K}a_l \cos \alpha  \frac{\sin D_0}{D_0}  \frac{1}{1 + |E^c| r} > 1> 1- m \sqrt{K}, \quad \frac{m}{\sqrt{K}} > 2m^2,
\end{equation*} 
which together with \eqref{D-10} immediately implies that for $0 \le t < T^*$,
\begin{equation*}
\begin{aligned}
&\frac{d}{dt} \left[ m \frac{d}{dt}\sum_{i=1}^N\sum_{j=1}^N(\theta_i - \theta_j)^2 + (1 - m\sqrt{K})\sum_{i=1}^N\sum_{j=1}^N(\theta_i - \theta_j)^2 + 2m^2 \sum_{i=1}^N \sum_{j=1}^N (\omega_i - \omega_j)^2  \right]\\
&\le \left[ \frac{3(1+|E^c|r)D_0}{Ka_l \cos \alpha \sin D_0}+ 12m \right] D_\Omega + 12N^2K a_u^2 \sin^2 \alpha \left[\frac{(1+|E^c|r)D_0}{a_l \cos \alpha \sin D_0}  + 4mK\right] \\
&- \sqrt{K} \left[m  \frac{d}{dt} \sum_{i=1}^N\sum_{j=1}^N(\theta_i - \theta_j)^2 + (1 - m\sqrt{K}) \sum_{i=1}^N\sum_{j=1}^N(\theta_i - \theta_j)^2 + 2m^2 \sum_{i=1}^N \sum_{j=1}^N (\omega_i - \omega_j)^2 \right]. 
\end{aligned}
\end{equation*}
According to \eqref{p_functional}, this means that for $t \in [0, T^*)$,
\begin{equation*}
\begin{aligned}
\dot{\mathcal{E}}_1(t) &\le - \sqrt{K} \left\{ \mathcal{E}_1(t) - \left[ \frac{3(1+|E^c|r)D_0}{K^{\frac{3}{2}}a_l \cos \alpha \sin D_0}+ 12\frac{m}{K^{\frac{1}{2}}} \right] D_\Omega \right.\\
&\left.\qquad\qquad- 12N^2K^{\frac{1}{2}} a_u^2 \sin^2 \alpha \left[\frac{(1+|E^c|r)D_0}{a_l \cos \alpha \sin D_0}  + 4mK\right] \right\},
\end{aligned}
\end{equation*}
which yields that
\begin{equation}\label{C-31}
\mathcal{E}_1(t) \le \max \left\{ \mathcal{E}_1(0), 2C_1  \right\} < \frac{1}{8} D_0^2, \quad \text{for} \ t \in [0,T^*).
\end{equation}
Here, we use the assumption in \eqref{Con_16} that
\begin{equation*}
C_1 = \left[ \frac{3(1+|E^c|r)D_0}{K^{\frac{3}{2}}a_l \cos \alpha \sin D_0}+ 12\frac{m}{K^{\frac{1}{2}}} \right] D_\Omega + 12N^2K^{\frac{1}{2}} a_u^2 \sin^2 \alpha \left[\frac{(1+|E^c|r)D_0}{a_l \cos \alpha \sin D_0}  + 4mK\right] < \frac{1}{16} D_0^2.
\end{equation*}
Then it follows from \eqref{C-31} that
\begin{equation*}
\begin{aligned}
\mathcal{E}_1 (T^*) \le \max \left\{ \mathcal{E}_1(0), 2C_1  \right\} < \frac{1}{8} D_0^2,
\end{aligned}
\end{equation*}
which obviously contradicts to  \eqref{continuity}. This means $T^* = +\infty$. 
Thus, we obtain
\begin{equation}\label{C-32}
\mathcal{E}_1(t) < \frac{1}{8} D_0^2 \quad \text{and} \quad D(\theta(t)) < D_0 < \pi,  \ \forall \ 0 \le t < +\infty,
\end{equation}
which implies that
\begin{equation*}
\dot{\mathcal{E}}_1(t) \le \sqrt{K}C_1 - \sqrt{K} \mathcal{E}_1(t), \quad t \in [0,+\infty).
\end{equation*}
\end{proof}

\subsection{Entrance to a small region} Subsequently, we can use the dissipation of energy functional $\mathcal{E}_1$ to prove the decreasing of phase diameter.  

\begin{lemma}\label{p_bound}
Let $\theta(t) = (\theta_1(t), \theta_2(t), \cdots,\theta_N(t))$ be a solution to system \eqref{sp_dynamic} and \eqref{initial}, and suppose the assumptions in \eqref{Con_11}, \eqref{Con_13}, \eqref{Con_14}, \eqref{Con_16} hold.
Then, there exists a finite time $t_*$ such that
\begin{equation*}
D(\theta(t)) \le D^\infty, \quad t \ge t_*.
\end{equation*}
\end{lemma}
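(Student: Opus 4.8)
The plan is to integrate the linear Gr\"onwall-type differential inequality supplied by Lemma~\ref{func1_dynamic} and then transfer the resulting decay of $\mathcal{E}_1$ to the phase diameter through Lemma~\ref{p_positive}. Both of these inputs are available here: the hypotheses of Lemma~\ref{p_bound} include \eqref{Con_11}, \eqref{Con_13}, \eqref{Con_14}, \eqref{Con_16}, so Lemma~\ref{func1_dynamic} gives $\dot{\mathcal{E}}_1(t)\le \sqrt{K}\,C_1-\sqrt{K}\,\mathcal{E}_1(t)$ on $[0,+\infty)$, and since $\eqref{Con_14}_1$ is exactly the condition $m\sqrt{K}<\tfrac18$, Lemma~\ref{p_positive} gives $D(\theta(t))\le\sqrt{8\,\mathcal{E}_1(t)}$ for all $t\ge0$.

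First I would multiply the differential inequality by the integrating factor $e^{\sqrt{K}t}$ and integrate over $[0,t]$ to obtain
\begin{equation*}
\mathcal{E}_1(t)\ \le\ C_1+\bigl(\mathcal{E}_1(0)-C_1\bigr)e^{-\sqrt{K}\,t},\qquad t\ge0.
\end{equation*}
The decisive structural fact is the strict gap $C_1<\tfrac{1}{16}(D^\infty)^2<\tfrac18(D^\infty)^2$, which is precisely assumption \eqref{Con_16}; in particular $\tfrac18(D^\infty)^2-C_1>0$. Hence the right-hand side above falls below $\tfrac18(D^\infty)^2$ once the exponential term is small enough. Concretely, if $\mathcal{E}_1(0)\le C_1$ one may take $t_*=0$, while if $\mathcal{E}_1(0)>C_1$ one sets
\begin{equation*}
t_*\ :=\ \frac{1}{\sqrt{K}}\,\ln\!\frac{\mathcal{E}_1(0)-C_1}{\tfrac18(D^\infty)^2-C_1}\ \in\ (0,+\infty),
\end{equation*}
which is finite thanks to the positivity of the denominator; for $t\ge t_*$ this yields $\mathcal{E}_1(t)\le\tfrac18(D^\infty)^2$.

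Finally I would combine this with Lemma~\ref{p_positive} to conclude
\begin{equation*}
D(\theta(t))\ \le\ \sqrt{8\,\mathcal{E}_1(t)}\ \le\ \sqrt{8\cdot\tfrac18(D^\infty)^2}\ =\ D^\infty,\qquad t\ge t_*,
\end{equation*}
which is the assertion. I do not expect a genuine obstacle in this lemma: the real work has already been carried out in Lemma~\ref{func1_dynamic}, whose continuity argument traps $\mathcal{E}_1$ below $\tfrac18 D_0^2$ and balances the frustration, inertia and coupling contributions. The only points requiring a little care are the case distinction on the sign of $\mathcal{E}_1(0)-C_1$ and the verification that $\tfrac18(D^\infty)^2-C_1>0$; both follow immediately from Assumption~$(\mathcal{A})$, in particular \eqref{Con_16}.
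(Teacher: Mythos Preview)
Your argument is correct and follows essentially the same route as the paper: invoke Lemma~\ref{func1_dynamic} for the Gronwall inequality, show $\mathcal{E}_1$ drops below $\tfrac18(D^\infty)^2$ in finite time using \eqref{Con_16}, and then apply Lemma~\ref{p_positive}. The only cosmetic difference is that you integrate the linear inequality exactly via the integrating factor to get an explicit exponential bound and a logarithmic formula for $t_*$, whereas the paper bounds $\dot{\mathcal{E}}_1\le -\tfrac{1}{16}\sqrt{K}(D^\infty)^2$ while $\mathcal{E}_1\ge\tfrac18(D^\infty)^2$ and reads off $t_*$ linearly; both are standard and equivalent here (one minor caveat: your claim $t_*\in(0,+\infty)$ needs $\mathcal{E}_1(0)>\tfrac18(D^\infty)^2$, not just $\mathcal{E}_1(0)>C_1$, but in the remaining subcase $t_*=0$ already works).
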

\begin{proof}
It is known from Lemma \ref{func1_dynamic} that for $t \in [0,+\infty)$,
\begin{equation}\label{C-33}
\begin{aligned}
\dot{\mathcal{E}}_1(t) &\le - \sqrt{K} \left\{ \mathcal{E}_1(t) - \left[ \frac{3(1+|E^c|r)D_0}{K^{\frac{3}{2}}a_l \cos \alpha \sin D_0}+ 12\frac{m}{K^{\frac{1}{2}}} \right] D_\Omega \right.\\
&\left.\qquad \qquad - 12N^2K^{\frac{1}{2}} a_u^2 \sin^2 \alpha \left[\frac{(1+|E^c|r)D_0}{a_l \cos \alpha \sin D_0}  + 4mK\right] \right\}.
\end{aligned}
\end{equation}
Without loss of generality, we assume $D_0 \in [\frac{\pi}{2}, \pi)$, otherwise, the desired result can follow from \eqref{C-32} with $D^\infty = D_0 $ and $t_* = 0$. We divide the proof into two cases.

\noindent $\bullet$ {\bf{Case 1:}} If $\mathcal{E}_1(0) \le \frac{1}{8} (D^\infty)^2$, based on \eqref{C-33} and the assumption \eqref{Con_16} that
\begin{equation*}
\left[ \frac{3(1+|E^c|r)D_0}{K^{\frac{3}{2}}a_l \cos \alpha \sin D_0}+ 12\frac{m}{K^{\frac{1}{2}}} \right] D_\Omega + 12N^2K^{\frac{1}{2}} a_u^2 \sin^2 \alpha \left[\frac{(1+|E^c|r)D_0}{a_l \cos \alpha \sin D_0}  + 4mK\right] < \frac{1}{16} (D^\infty)^2,
\end{equation*}
we have
\begin{equation*}
\begin{aligned}
\dot{\mathcal{E}}_1(t) \le - \sqrt{K} \left\{ \mathcal{E}_1(t) - \frac{1}{16} (D^\infty)^2 \right\}, \quad t \in [0,+\infty),
\end{aligned}
\end{equation*}
which implies that
\begin{equation*}
\mathcal{E}_1(t) \le \frac{1}{8} (D^\infty)^2, \quad t \in [0,+\infty).
\end{equation*}
According to Lemma \ref{p_positive}, this means that
\begin{equation*}
D(\theta(t)) \le \sqrt{8 \mathcal{E}_1(t)} \le D^\infty < \frac{\pi}{2}, \quad \text{for} \ t \in [0,+\infty),
\end{equation*}
which leads to the desired result with $t_* = 0$. 

\noindent $\bullet$ {\bf{Case 2:}} If $\mathcal{E}_1(0) > \frac{1}{8} (D^\infty)^2$, then for $\frac{1}{8} (D^\infty)^2 \le \mathcal{E}_1(t) \le \mathcal{E}_1(0)$, it follows from \eqref{C-33} and \eqref{Con_16} that
\begin{equation*}
\begin{aligned}
\dot{\mathcal{E}}_1(t) &\le - \sqrt{K} \left\{ \mathcal{E}_1(t) - \left[ \frac{3(1+|E^c|r)D_0}{K^{\frac{3}{2}}a_l \cos \alpha \sin D_0}+ 12\frac{m}{K^{\frac{1}{2}}} \right] D_\Omega \right.\\
&\left. \qquad\qquad- 12N^2K^{\frac{1}{2}} a_u^2 \sin^2 \alpha \left[\frac{(1+|E^c|r)D_0}{a_l \cos \alpha \sin D_0}  + 4mK\right] \right\}\\
&\le - \sqrt{K} \left[ \frac{1}{8} (D^\infty)^2 - \frac{1}{16} (D^\infty)^2\right] = - \frac{1}{16}\sqrt{K} (D^\infty)^2.
\end{aligned}
\end{equation*}
This means that we can find a finite time $t_*$ below
\begin{equation*}
t_* = \frac{\mathcal{E}_1(0) - \frac{1}{8} (D^\infty)^2 }{\frac{1}{16}\sqrt{K} (D^\infty)^2},
\end{equation*}
such that
\begin{equation}\label{C-34}
\begin{aligned}
\mathcal{E}_1(t) \le \frac{1}{8} (D^\infty)^2, \quad t \ge t_*. 
\end{aligned}
\end{equation}
Then, according to \eqref{C-34}, we have
\begin{equation*}
D(\theta(t)) \le D^\infty < \frac{\pi}{2}, \quad t \ge t_*,
\end{equation*}
which yields the desired result.
\end{proof}

\section{Frequency synchronization}\label{sec:4}
\setcounter{equation}{0}

Now we turn to study the frequency synchronization and the convergence rate. We will show that the frequency synchronization emerges exponentially fast for the second-order Kuramoto model  \eqref{sp_dynamic} with inertia and frustration. 

We directly differentiate second-order Kuramoto system \eqref{sp_dynamic} to obtain the dynamics of frequency as follows
\begin{equation}\label{sf_dynamic}
m \ddot{\omega}_i(t) + \dot{\omega}_i(t) = \frac{K}{N} \sum_{l=1}^N a_{il} \cos (\theta_l(t) - \theta_i(t) + \alpha) (\omega_l(t) - \omega_i(t)), \quad t\ge t_*, \ i=1,2,\ldots,N,
\end{equation}
where $t_*$ is given in Lemma \ref{p_bound}.
For the sake of discussion, we further apply the formula
\begin{equation*}
\cos(x+y) = \cos x \cos y - \sin x \sin y
\end{equation*}
to transform the system \eqref{sf_dynamic} into the form as below
\begin{equation}\label{ex_sf_dynamic}
\begin{aligned}
m \ddot{\omega}_i + \dot{\omega}_i &= \frac{K}{N} \sum_{l=1}^N a_{il} \left[ \cos (\theta_l - \theta_i)  \cos\alpha - \sin (\theta_l - \theta_i) \sin \alpha  \right]  (\omega_l - \omega_i) \\
&= \frac{K}{N} \cos \alpha \sum_{l=1}^N a_{il} \cos (\theta_l - \theta_i) (\omega_l - \omega_i) - \frac{K}{N} \sin \alpha \sum_{l=1}^N a_{il} \sin (\theta_l - \theta_i) (\omega_l - \omega_i). 
\end{aligned}
\end{equation}
Similar to Section \ref{sec:3}, we will observe the dissipation from the dynamics of a proper energy functional which is constructed as below:
\begin{equation}\label{f_functional}
\mathcal{E}_2(t) = 2m \sum_{i=1}^N \sum_{j=1}^N (\omega_i - \omega_j)(\dot{\omega}_i - \dot{\omega}_j) + (1 - m\sqrt{K})\sum_{i=1}^N \sum_{j=1}^N (\omega_i - \omega_j)^2 + 2m^2\sum_{i=1}^N \sum_{j=1}^N (\dot{\omega}_i - \dot{\omega}_j)^2.
\end{equation}

\subsection{Preparatory lemmas} In this part, we provide several energy estimates which will be crucially used in the later analysis. 
\begin{lemma}\label{f_positive}
Suppose the inertia and coupling strength satisfy
\begin{equation*}
m\sqrt{K} < \frac{1}{8}.
\end{equation*}
Then for any $t \ge 0$, we have
\begin{equation*}
\mathcal{E}_2(t)  \ge \frac{1}{8} \sum_{i=1}^N\sum_{j=1}^N(\omega_i(t) - \omega_j(t))^2 + \frac{2}{3} m^2 \sum_{i=1}^N \sum_{j=1}^N (\dot{\omega}_i(t) - \dot{\omega}_j(t))^2.
\end{equation*}
Moreover, we have
\begin{align*}
D(\omega(t)) \le \sqrt{8\mathcal{E}_2(t)}, \quad t\ge 0.
\end{align*}
\end{lemma}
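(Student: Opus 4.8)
The proof of Lemma~\ref{f_positive} is essentially identical to that of Lemma~\ref{p_positive}, with the phase differences $(\theta_i-\theta_j)$ replaced by the frequency differences $(\omega_i-\omega_j)$ and the frequency differences $(\omega_i-\omega_j)$ replaced by the acceleration differences $(\dot\omega_i-\dot\omega_j)$. The key structural fact is that the energy functional $\mathcal{E}_2(t)$ in \eqref{f_functional} has exactly the same quadratic form in $\big((\omega_i-\omega_j),(\dot\omega_i-\dot\omega_j)\big)$ as $\mathcal{E}_1(t)$ in \eqref{p_functional} has in $\big((\theta_i-\theta_j),(\omega_i-\omega_j)\big)$, so the same algebraic manipulation yields the positivity bound.

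Concretely, the plan is as follows. First I would apply the elementary inequality \eqref{formula1}, namely $2|x||y|\le x^2+y^2$, to the cross term: choosing $x=\sqrt{3}/2\,(\omega_i-\omega_j)$ and $y=(2m/\sqrt{3})(\dot\omega_i-\dot\omega_j)$ gives
\begin{equation*}
2m\sum_{i=1}^N\sum_{j=1}^N(\omega_i-\omega_j)(\dot\omega_i-\dot\omega_j)\ \ge\ -\sum_{i=1}^N\sum_{j=1}^N\left[\frac{3}{4}(\omega_i-\omega_j)^2+\frac{4}{3}m^2(\dot\omega_i-\dot\omega_j)^2\right].
\end{equation*}
Substituting this into \eqref{f_functional} and collecting the coefficient of $\sum(\omega_i-\omega_j)^2$, which becomes $1-m\sqrt{K}-\tfrac34=\tfrac14-m\sqrt{K}$, and the coefficient of $\sum m^2(\dot\omega_i-\dot\omega_j)^2$, which becomes $2-\tfrac43=\tfrac23$, I obtain
\begin{equation*}
\mathcal{E}_2(t)\ \ge\ \left(\frac14-m\sqrt{K}\right)\sum_{i=1}^N\sum_{j=1}^N(\omega_i-\omega_j)^2+\frac23 m^2\sum_{i=1}^N\sum_{j=1}^N(\dot\omega_i-\dot\omega_j)^2.
\end{equation*}
Invoking the hypothesis $m\sqrt{K}<\tfrac18$ gives $\tfrac14-m\sqrt{K}>\tfrac18$, which yields the first claimed inequality.

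For the second claim, I would drop the nonnegative acceleration term to get $\mathcal{E}_2(t)\ge\tfrac18\sum_{i,j}(\omega_i(t)-\omega_j(t))^2$, hence $(\omega_i(t)-\omega_j(t))^2\le 8\mathcal{E}_2(t)$ for every pair $i,j$, and taking the maximum over $i,j$ gives $D(\omega(t))=\max_{i,j}|\omega_i(t)-\omega_j(t)|\le\sqrt{8\mathcal{E}_2(t)}$. There is no real obstacle here; the only point requiring a modicum of care is the bookkeeping of the constants $\tfrac34$ and $\tfrac43$ in the Young-type splitting so that the residual coefficients come out to be exactly $\tfrac14-m\sqrt{K}$ and $\tfrac23$, matching the statement. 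Everything else is a direct transcription of the argument already given for Lemma~\ref{p_positive}.
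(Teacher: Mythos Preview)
Your proposal is correct and follows exactly the approach the paper intends: the paper's own proof of Lemma~\ref{f_positive} simply reads ``The proof is similar to that in Lemma~\ref{p_positive},'' and what you have written is precisely that transcription with the same Young-type splitting constants $\tfrac34$ and $\tfrac43$.
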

\begin{proof}
The proof is similar to that in Lemma \ref{p_positive}.
\end{proof}

Recall that in Section \ref{sec:3}, we have shown that all Kuramoto oscillators will be concentrated into an arc with length less than $\frac{\pi}{2}$ after some finite time. Based on this good property, $\cos(\theta_i-\theta_l)$ is positive which eventually yields the dissipation of relative frequencies. In the following, we give some preparatory lemmas.

\begin{lemma}\label{E2_f} 
Suppose the graph $\mathcal{G}$ is symmetric and connected, and let $(\theta(t),\omega(t))$ be a solution to system \eqref{ex_sf_dynamic} satisfying
\begin{equation*}
D(\theta(t)) \le D^\infty < \frac{\pi}{2}.
\end{equation*}
Then, we have the following estimates:

\begin{align}
&- \frac{K}{N} \sin \alpha \sum_{i=1}^N \sum_{j=1}^N \sum_{l=1}^N [a_{il} \sin (\theta_l - \theta_i) (\omega_l - \omega_i) - a_{jl} \sin (\theta_l - \theta_j) (\omega_l - \omega_j)] \cdot 2(\omega_i - \omega_j)\label{F-1}\\
&\le 4 K a_u \sin \alpha \sin D^\infty \sum_{i=1}^N \sum_{j=1}^N (\omega_i - \omega_j)^2,\notag\\
&\notag\\
&\frac{K}{N} \cos \alpha \sum_{i=1}^N \sum_{j=1}^N  \sum_{l=1}^N [a_{il} \cos (\theta_l - \theta_i) (\omega_l - \omega_i) - a_{jl} \cos (\theta_l - \theta_j) (\omega_l - \omega_j)] \cdot 2(\omega_i - \omega_j)\label{F-3}\\
&\le - 2K a_l \cos \alpha  \cos D^\infty \frac{1}{1+ r|E^c|} \sum_{i=1}^N \sum_{j=1}^N (\omega_i - \omega_j)^2.\notag
\end{align}

\end{lemma}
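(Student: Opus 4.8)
The plan is to treat the two estimates separately: \eqref{F-3} is the ``good'' dissipation term and must be reduced to a clean negative quadratic form, whereas \eqref{F-1} is a perturbative term for which a crude upper bound suffices. Neither estimate actually uses the dynamics \eqref{ex_sf_dynamic}; only the geometric hypothesis $D(\theta(t))\le D^\infty<\frac{\pi}{2}$ enters.

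For \eqref{F-3} I would follow verbatim the index bookkeeping already carried out for $\mathcal{I}_2$ in the proof of Lemma \ref{E1_p}, since the algebraic structure is identical after replacing $\sin(\theta_l-\theta_i)$ by $\cos(\theta_l-\theta_i)(\omega_l-\omega_i)$ and the contracting factor $2(\theta_i-\theta_j)$ by $2(\omega_i-\omega_j)$. Writing $G_{il}:=a_{il}\cos(\theta_l-\theta_i)(\omega_l-\omega_i)$, the left side of \eqref{F-3} is $\frac{K}{N}\cos\alpha\sum_{i\ne j}\sum_{l=1}^N[G_{il}-G_{jl}]\cdot 2(\omega_i-\omega_j)$ (the diagonal $i=j$ drops). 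Splitting $\sum_l=\sum_{l\in\{i,j\}}+\sum_{l\notin\{i,j\}}$: in the first part only the $l=j$ term of $G_{il}$ and the $l=i$ term of $G_{jl}$ survive, and the network symmetry $a_{ij}=a_{ji}$ together with the evenness of $\cos$ and the antisymmetry of $\omega_j-\omega_i$ collapses it to $-\frac{4K}{N}\cos\alpha\sum_{i\ne j}a_{ji}\cos(\theta_i-\theta_j)(\omega_i-\omega_j)^2$; in the second part, relabelling the dummy indices ($j\leftrightarrow l$ in the $G_{il}$-piece and $i\leftrightarrow l$ in the $G_{jl}$-piece) writes both pieces over the same triple of distinct indices, and after using the symmetry the two contracting factors telescope, $2(\omega_i-\omega_l)+2(\omega_l-\omega_j)=2(\omega_i-\omega_j)$, so summing the now-free index $l$ over its $N-2$ values produces $-\frac{2(N-2)K}{N}\cos\alpha\sum_{i\ne j}a_{ji}\cos(\theta_i-\theta_j)(\omega_i-\omega_j)^2$. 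Adding the two parts yields that the left side of \eqref{F-3} equals $-2K\cos\alpha\sum_{(j,i)\in E}a_{ji}\cos(\theta_i-\theta_j)(\omega_i-\omega_j)^2$. Finally, $|\theta_i-\theta_j|\le D(\theta(t))\le D^\infty<\frac{\pi}{2}$ forces $\cos(\theta_i-\theta_j)\ge\cos D^\infty>0$; bounding $a_{ji}\ge a_l$ on $E$, using that the graph is symmetric so $\sum_{(j,i)\in E}=\sum_{(i,j)\in E}$, and applying the frequency chain of Lemma \ref{pd_equiv}, i.e.\ $\sum_{(i,j)\in E}(\omega_i-\omega_j)^2\ge\frac{1}{1+r|E^c|}\sum_{i,j}(\omega_i-\omega_j)^2$, gives \eqref{F-3}.

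For \eqref{F-1} only a crude bound is required. Set $H_{il}:=a_{il}\sin(\theta_l-\theta_i)(\omega_l-\omega_i)$. I would first observe $|H_{il}|\le a_u\sin D^\infty\,|\omega_l-\omega_i|$ for all $i,l$ --- trivially when $(i,l)\notin E$ (then $a_{il}=0$), and otherwise from $a_{il}\le a_u$ together with $|\sin(\theta_l-\theta_i)|\le\sin D^\infty$, valid since $|\theta_l-\theta_i|\le D^\infty\le\frac{\pi}{2}$. Then bound the triple sum by its modulus, separate the two summands $H_{il}$ and $H_{jl}$, and in each apply $2|\omega_l-\omega_i|\,|\omega_i-\omega_j|\le(\omega_l-\omega_i)^2+(\omega_i-\omega_j)^2$ (respectively with $\omega_l-\omega_j$ in place of $\omega_l-\omega_i$). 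In every resulting term exactly one index does not appear under the square, and summing that index off contributes a factor $N$ that cancels the $\frac{1}{N}$ in front of the triple sum; collecting terms leaves precisely $4Ka_u\sin\alpha\sin D^\infty\sum_{i,j}(\omega_i-\omega_j)^2$, which is \eqref{F-1}.

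The only genuinely delicate step is the reindexing that turns the triple sum in \eqref{F-3} into a single double sum over $E$; it is entirely parallel to the treatment of $\mathcal{I}_2$ in Lemma \ref{E1_p}, but one must keep careful track of which parity ($\cos$ even, $\omega_i-\omega_j$ odd, $a_{ij}$ symmetric) is used at each relabelling. All remaining ingredients are the symmetry of the graph, the monotonicity and parity of $\cos$ and $\sin$ on a quarter circle, the elementary inequality \eqref{formula1}, and Lemma \ref{pd_equiv}.
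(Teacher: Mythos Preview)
Your proposal is correct and follows essentially the same route as the paper: for \eqref{F-3} the paper performs exactly the index splitting $l\in\{i,j\}$ versus $l\notin\{i,j\}$ and the relabelling you describe (invoking the treatment of $\mathcal{I}_2$ in Lemma~\ref{E1_p}) to reduce to $-2K\cos\alpha\sum_{(j,i)\in E}a_{ji}\cos(\theta_i-\theta_j)(\omega_i-\omega_j)^2$, then applies $\cos(\theta_i-\theta_j)\ge\cos D^\infty$ and Lemma~\ref{pd_equiv}; for \eqref{F-1} the paper likewise bounds $|\sin(\theta_l-\theta_i)|\le\sin D^\infty$, splits the two summands, applies \eqref{formula1}, and sums the free index to cancel the $1/N$.
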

\begin{proof}
\textbf{(1)} We first prove the first inequality. It can be seen from the assumption $D(\theta(t)) \le D^\infty$ that
\begin{equation*}
\begin{aligned}
&- \frac{K}{N} \sin \alpha \sum_{i=1}^N \sum_{j=1}^N \sum_{l=1}^N [a_{il} \sin (\theta_l - \theta_i) (\omega_l - \omega_i) - a_{jl} \sin (\theta_l - \theta_j) (\omega_l - \omega_j)] \cdot 2(\omega_i - \omega_j)\\
&\le \frac{K}{N} a_u \sin \alpha \sum_{i=1}^N \sum_{j=1}^N \sum_{l=1}^N \left[ |\sin (\theta_l - \theta_i)| |\omega_l - \omega_i| + |\sin (\theta_l - \theta_j)||\omega_l - \omega_j|\right] \cdot 2|\omega_i - \omega_j|\\
&\le \frac{K}{N} a_u \sin \alpha \sin D^\infty \sum_{i=1}^N \sum_{j=1}^N \sum_{l=1}^N [(\omega_l - \omega_i)^2 + (\omega_i - \omega_j)^2] \\
&+ \frac{K}{N} a_u \sin \alpha \sin D^\infty \sum_{i=1}^N \sum_{j=1}^N \sum_{l=1}^N [(\omega_l - \omega_j)^2+ (\omega_i - \omega_j)^2].
\end{aligned}
\end{equation*}
This yields that
\begin{equation}\label{F-2}
\begin{aligned}
&- \frac{K}{N} \sin \alpha \sum_{i=1}^N \sum_{j=1}^N \sum_{l=1}^N [a_{il} \sin (\theta_l - \theta_i) (\omega_l - \omega_i) - a_{jl} \sin (\theta_l - \theta_j) (\omega_l - \omega_j)] \cdot 2(\omega_i - \omega_j)\\
&\le 4 K a_u \sin \alpha \sin D^\infty \sum_{i=1}^N \sum_{j=1}^N (\omega_i - \omega_j)^2.
\end{aligned}
\end{equation}

\noindent \textbf{(2)} For the proof of second inequality, we set
\begin{equation*}
\begin{aligned}
\mathcal{L}_2 &= \frac{K}{N} \cos \alpha \sum_{i=1}^N \sum_{j=1}^N  \sum_{l=1}^N [a_{il} \cos (\theta_l - \theta_i) (\omega_l - \omega_i) - a_{jl} \cos (\theta_l - \theta_j) (\omega_l - \omega_j)] \cdot 2(\omega_i - \omega_j)\\
&=- \frac{4K}{N} \cos \alpha \sum^N_{\substack{i,j=1 \\ i \ne j}} a_{ji} \cos (\theta_i - \theta_j) (\omega_i - \omega_j)^2 \\
&+ \frac{K}{N} \cos \alpha\sum^N_{\substack{i,j=1 \\ i \ne j}} \sum^N_{\substack{l=1 \\ l \ne i,j}} [a_{il} \cos (\theta_l - \theta_i) (\omega_l - \omega_i)] \cdot 2(\omega_i - \omega_j)\\
&+\frac{K}{N} \cos \alpha\sum^N_{\substack{i,j=1 \\ i \ne j}} \sum^N_{\substack{l=1 \\ l \ne i,j}} [- a_{jl} \cos (\theta_l - \theta_j) (\omega_l - \omega_j)] \cdot 2(\omega_i - \omega_j).
\end{aligned}
\end{equation*}
For the last two terms in above equality, similar to the proof in Lemma \ref{E1_p}, we exchange the indices to have
\begin{equation*}
\begin{aligned}
&\frac{K}{N} \cos \alpha\sum^N_{\substack{i,j=1 \\ i \ne j}} \sum^N_{\substack{l=1 \\ l \ne i,j}} [a_{il} \cos (\theta_l - \theta_i) (\omega_l - \omega_i)] \cdot 2(\omega_i - \omega_j)\\
&+\frac{K}{N} \cos \alpha\sum^N_{\substack{i,j=1 \\ i \ne j}} \sum^N_{\substack{l=1 \\ l \ne i,j}} [- a_{jl} \cos (\theta_l - \theta_j) (\omega_l - \omega_j)] \cdot 2(\omega_i - \omega_j)\\
&= - \frac{2K(N-2)}{N} \cos \alpha \sum^N_{\substack{i,j=1 \\ i \ne j}} a_{ji} \cos (\theta_i - \theta_j)(\omega_i - \omega_j)^2\\
\end{aligned}
\end{equation*}
This together with Lemma \ref{pd_equiv} leads to
\begin{equation*}
\begin{aligned}
\mathcal{L}_2 &= - \frac{4K}{N} \cos \alpha \sum^N_{\substack{i,j=1 \\ i \ne j}} a_{ji} \cos (\theta_i - \theta_j) (\omega_i - \omega_j)^2 - \frac{2K(N-2)}{N} \cos \alpha \sum^N_{\substack{i,j=1 \\ i \ne j}} a_{ji} \cos (\theta_i - \theta_j)(\omega_i - \omega_j)^2\\
&= -2K \cos \alpha \sum_{i=1}^N \sum_{j=1}^N a_{ji} \cos (\theta_i - \theta_j)(\omega_i - \omega_j)^2 = - 2K \cos \alpha \sum_{(j,i) \in E} a_{ji} \cos (\theta_i - \theta_j)(\omega_i - \omega_j)^2\\
&\le - 2Ka_l \cos \alpha  \cos D^\infty \sum_{(j,i) \in E} (\omega_i - \omega_j)^2 \le - 2K a_l \cos \alpha  \cos D^\infty \frac{1}{1+ r|E^c|} \sum_{i=1}^N \sum_{j=1}^N (\omega_i - \omega_j)^2.
\end{aligned}
\end{equation*}
\end{proof}

\begin{lemma}\label{E2_dotf}
Suppose the graph $\mathcal{G}$ is symmetric and connected, and let $(\theta(t),\omega(t))$ be a solution to system  \eqref{ex_sf_dynamic} satisfying
\begin{equation*}
D(\theta(t)) \le D^\infty < \frac{\pi}{2}.
\end{equation*}
Then, we have the following estimates:

\begin{align}
&- \frac{2mK}{N} \sin \alpha \sum_{i=1}^N \sum_{j=1}^N\sum_{l=1}^N [a_{il} \sin (\theta_l - \theta_i) (\omega_l - \omega_i) - a_{jl} \sin (\theta_l - \theta_j) (\omega_l - \omega_j)] \cdot 2(\dot{\omega}_i - \dot{\omega}_j)\notag\\
&\le 4mK a_u \sin \alpha \sin D^\infty \sum_{i=1}^N \sum_{j=1}
^N (\omega_i - \omega_j)^2 + 4mK a_u \sin \alpha \sin D^\infty \sum_{i=1}^N \sum_{j=1}
^N (\dot{\omega}_i - \dot{\omega}_j)^2,\label{F-5} \\
&\notag\\
&\frac{2mK}{N} \cos \alpha \sum_{i=1}^N \sum_{j=1}^N\sum_{l=1}^N [a_{il} \cos (\theta_l - \theta_i) (\omega_l - \omega_i) - a_{jl} \cos (\theta_l - \theta_j) (\omega_l - \omega_j)] \cdot 2(\dot{\omega}_i - \dot{\omega}_j)\notag\\
&\le 8mK^2a_u^2 \cos^2 \alpha \sum_{i=1}^N \sum_{j=1}^N (\omega_i - \omega_j)^2 + \frac{m}{2} \sum_{i=1}^N \sum_{j=1}^N (\dot{\omega}_i - \dot{\omega}_j)^2.\label{F-6}
\end{align}

\end{lemma}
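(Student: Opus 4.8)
The plan is to establish \eqref{F-5} and \eqref{F-6} by the same two mechanisms already used in Lemmas \ref{E1_f} and \ref{E2_f}: a direct pointwise estimate for the $\sin\alpha$-term, whose smallness is carried by the factor $\sin D^\infty$, and the network-symmetrization reduction for the $\cos\alpha$-term, which is what keeps the constant as small as $8mK^2a_u^2\cos^2\alpha$.

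\emph{The $\sin\alpha$-term \eqref{F-5}.} Denote the left-hand side by $\mathcal{M}_1$. Using $D(\theta(t))\le D^\infty<\frac{\pi}{2}$ to get $|\sin(\theta_l-\theta_i)|\le\sin D^\infty$, bounding $a_{il},a_{jl}\le a_u$, and applying the triangle inequality to the bracket, I would first reach
\begin{equation*}
\mathcal{M}_1\le\frac{2mK}{N}a_u\sin\alpha\sin D^\infty\sum_{i,j,l=1}^N\big(|\omega_l-\omega_i|+|\omega_l-\omega_j|\big)\cdot 2|\dot\omega_i-\dot\omega_j|.
\end{equation*}
Then I would apply \eqref{formula1} to each product $2|\omega_l-\omega_i||\dot\omega_i-\dot\omega_j|$ and $2|\omega_l-\omega_j||\dot\omega_i-\dot\omega_j|$, and carry out the trivial index reductions $\sum_{i,j,l}(\omega_l-\omega_i)^2=\sum_{i,j,l}(\omega_l-\omega_j)^2=N\sum_{i,j}(\omega_i-\omega_j)^2$ and $\sum_{i,j,l}(\dot\omega_i-\dot\omega_j)^2=N\sum_{i,j}(\dot\omega_i-\dot\omega_j)^2$; the factor $1/N$ cancels and the two stated terms $4mKa_u\sin\alpha\sin D^\infty\sum_{i,j}(\omega_i-\omega_j)^2$ and $4mKa_u\sin\alpha\sin D^\infty\sum_{i,j}(\dot\omega_i-\dot\omega_j)^2$ come out exactly. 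This part is routine and parallels the proof of \eqref{F-1}.

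\emph{The $\cos\alpha$-term \eqref{F-6}.} Write the left-hand side as $\mathcal{M}_2$. The substantive step is the reduction of the triple sum to a clean double sum, following the index manipulations used in the proofs of Lemma \ref{E1_p} and Lemma \ref{E2_f}. I would split $\sum_l=\sum_{l\in\{i,j\}}+\sum_{l\ne i,j}$; the diagonal part collapses, via $a_{ij}=a_{ji}$ and the evenness of cosine, to $-\frac{8mK}{N}\cos\alpha\sum_{i\ne j}a_{ji}\cos(\theta_i-\theta_j)(\omega_i-\omega_j)(\dot\omega_i-\dot\omega_j)$. For the off-diagonal part I would relabel $j\leftrightarrow l$ in the $a_{il}$-piece and $i\leftrightarrow l$ in the $a_{jl}$-piece, factor out the common $a_{ji}\cos(\theta_i-\theta_j)(\omega_i-\omega_j)$, and recombine through the telescoping identity $(\dot\omega_i-\dot\omega_l)+(\dot\omega_l-\dot\omega_j)=\dot\omega_i-\dot\omega_j$; the summand then loses its $l$-dependence, so summing over $l\ne i,j$ contributes a factor $N-2$, giving $-\frac{4mK(N-2)}{N}\cos\alpha\sum_{i\ne j}a_{ji}\cos(\theta_i-\theta_j)(\omega_i-\omega_j)(\dot\omega_i-\dot\omega_j)$. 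Since $\frac{8mK}{N}+\frac{4mK(N-2)}{N}=4mK$, adding the two pieces yields
\begin{equation*}
\mathcal{M}_2=-4mK\cos\alpha\sum_{i=1}^N\sum_{j=1}^N a_{ji}\cos(\theta_i-\theta_j)(\omega_i-\omega_j)(\dot\omega_i-\dot\omega_j).
\end{equation*}
From here I would use $0<\cos(\theta_i-\theta_j)\le 1$ (since $D(\theta(t))<\frac{\pi}{2}$) and $a_{ji}\le a_u$ on edges to pass to $\mathcal{M}_2\le 4mKa_u\cos\alpha\sum_{(j,i)\in E}|\omega_i-\omega_j||\dot\omega_i-\dot\omega_j|\le 4mKa_u\cos\alpha\sum_{i,j}|\omega_i-\omega_j||\dot\omega_i-\dot\omega_j|$, and finish with the weighted form of \eqref{formula1}, $2|\omega_i-\omega_j||\dot\omega_i-\dot\omega_j|\le\lambda(\omega_i-\omega_j)^2+\lambda^{-1}(\dot\omega_i-\dot\omega_j)^2$, choosing $\lambda=4Ka_u\cos\alpha$ so that the $(\dot\omega_i-\dot\omega_j)^2$-coefficient becomes exactly $\frac{m}{2}$ and the $(\omega_i-\omega_j)^2$-coefficient becomes exactly $8mK^2a_u^2\cos^2\alpha$.

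\emph{Main obstacle.} The only delicate point is the bookkeeping in the symmetrization of $\mathcal{M}_2$: one must verify that the two relabelings are genuine re-namings of the same distinct-index summation (the equivalence of index sets flagged in the proof of Lemma \ref{E1_p}) and that the diagonal contribution, carrying the coefficient $8mK/N$, combines with the $(N-2)$-weighted off-diagonal contribution to produce a constant $4mK$ that is independent of $N$. Estimating $\mathcal{M}_2$ term-by-term before symmetrizing costs a factor of $4$ in the constant, so this reduction is exactly what makes \eqref{F-6} hold with the stated coefficient.
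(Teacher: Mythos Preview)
Your proposal is correct and follows essentially the same route as the paper: the $\sin\alpha$-term \eqref{F-5} is handled by the direct pointwise bound and \eqref{formula1} (the paper only remarks this), and for the $\cos\alpha$-term \eqref{F-6} you perform exactly the paper's index-exchange symmetrization to reduce $\mathcal{M}_2$ to $-4mK\cos\alpha\sum_{i,j}a_{ji}\cos(\theta_i-\theta_j)(\omega_i-\omega_j)(\dot\omega_i-\dot\omega_j)$, followed by the same weighted Young inequality with the same splitting. The only cosmetic difference is that you route the final bound through $\sum_{(j,i)\in E}$ before enlarging to the full double sum, whereas the paper uses $a_{ji}\le a_u$ directly; this changes nothing.
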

\begin{proof}
We will only prove \eqref{F-6} since \eqref{F-5} can be obtained by applying \eqref{formula1}. Now 
for convenience, we set
\begin{equation*}
\begin{aligned}
\mathcal{R}_2 = \frac{2mK}{N} \cos \alpha \sum_{i=1}^N \sum_{j=1}^N\sum_{l=1}^N [a_{il} \cos (\theta_l - \theta_i) (\omega_l - \omega_i) - a_{jl} \cos (\theta_l - \theta_j) (\omega_l - \omega_j)] \cdot 2(\dot{\omega}_i - \dot{\omega}_j).
\end{aligned}
\end{equation*}
Then we have 
\begin{equation}\label{F-7}
\begin{aligned}
\mathcal{R}_2 
&= - \frac{4mK}{N} \cos \alpha \sum^N_{\substack{i,j=1 \\ i \ne j}} a_{ji} \cos (\theta_i - \theta_j) (\omega_i - \omega_j) \cdot 2(\dot{\omega}_i - \dot{\omega}_j) \\
&+ \frac{2mK}{N} \cos \alpha \sum^N_{\substack{i,j=1 \\ i \ne j}}  \sum^N_{\substack{l=1 \\ l \ne i,j}} [a_{il} \cos (\theta_l - \theta_i) (\omega_l - \omega_i)] \cdot 2(\dot{\omega}_i - \dot{\omega}_j) \\
&+  \frac{2mK}{N} \cos \alpha \sum^N_{\substack{i,j=1 \\ i \ne j}}  \sum^N_{\substack{l=1 \\ l \ne i,j}} [- a_{jl} \cos (\theta_l - \theta_j) (\omega_l - \omega_j)] \cdot 2(\dot{\omega}_i - \dot{\omega}_j).\\
\end{aligned}
\end{equation}
Similar to the proof in Lemma \ref{E1_p}, we make exchanges of indices for the last two terms in above equality to obtain

\begin{equation*}
\begin{aligned}
& \frac{2mK}{N} \cos \alpha \sum^N_{\substack{i,j=1 \\ i \ne j}}  \sum^N_{\substack{l=1 \\ l \ne i,j}} [a_{il} \cos (\theta_l - \theta_i) (\omega_l - \omega_i)] \cdot 2(\dot{\omega}_i - \dot{\omega}_j) \\
&+  \frac{2mK}{N} \cos \alpha \sum^N_{\substack{i,j=1 \\ i \ne j}}  \sum^N_{\substack{l=1 \\ l \ne i,j}} [- a_{jl} \cos (\theta_l - \theta_j) (\omega_l - \omega_j)] \cdot 2(\dot{\omega}_i - \dot{\omega}_j)\\
&=- \frac{4mK(N-2)}{N} \cos \alpha \sum^N_{\substack{i,j=1 \\ i \ne j}} a_{ji} \cos (\theta_i - \theta_j) (\omega_i - \omega_j) \cdot (\dot{\omega}_i - \dot{\omega}_j).
\end{aligned}
\end{equation*}
This together with \eqref{F-7} implies that
 \begin{equation}\label{E-16}
 \begin{aligned}
 \mathcal{R}_2 
 & =  -4mK \cos \alpha \sum_{i=1}^N \sum_{j=1}^N a_{ji} \cos (\theta_i - \theta_j) (\omega_i - \omega_j) \cdot (\dot{\omega}_i - \dot{\omega}_j).\\
 \end{aligned}
 \end{equation}
Therefore, we eventually obtain the following estimate:
\begin{equation*}
\begin{aligned}
\mathcal{R}_2 
 &\le 4mK a_u \cos \alpha \sum_{i=1}^N \sum_{j=1}^N  |\omega_i - \omega_j| |\dot{\omega}_i - \dot{\omega}_j| \\
 &= \sum_{i=1}^N \sum_{j=1}^N 2\cdot 2mK a_u \cos \alpha \sqrt{\frac{2}{m}} |\omega_i - \omega_j| \cdot \sqrt{\frac{m}{2}} |\dot{\omega}_i - \dot{\omega}_j|\\
 &\le 8mK^2a_u^2 \cos^2 \alpha \sum_{i=1}^N \sum_{j=1}^N (\omega_i - \omega_j)^2 + \frac{m}{2} \sum_{i=1}^N \sum_{j=1}^N (\dot{\omega}_i - \dot{\omega}_j)^2.\\
\end{aligned}
\end{equation*}
\end{proof}

\subsection{Complete synchronization}
Next, we will show that the frequency diameter exponentially decays to zero for the Kuramoto model \eqref{sp_dynamic} with inertia and frustration.

\begin{lemma}\label{f_expo}
Let $(\theta(t), \omega(t))$ be a solution to system \eqref{ex_sf_dynamic} and suppose the initial data and system parameters satisfy the Assumption $(\mathcal{A})$.
Then, we have
\begin{equation*}
\mathcal{E}_2(t) \le \mathcal{E}_2(t_*) e^{-\sqrt{K}(t - t_*)}, \quad t \ge t_*.
\end{equation*} 
Moreover, we have
\begin{equation*}
D(\omega(t)) = \max_{1 \le i ,j \le N}|\omega_i(t) - \omega_j(t)| \le \sqrt{8\mathcal{E}_2(t_*)} e^{- \frac{\sqrt{K}}{2}(t - t_*)}, \quad t \ge t_*,
\end{equation*}
where $t_*$ is given in Lemma \ref{p_bound}.
\end{lemma}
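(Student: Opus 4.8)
The plan is to mimic the proof of Lemma~\ref{func1_dynamic}, now at the level of the frequency energy $\mathcal{E}_2$ and on the interval $[t_*,\infty)$ furnished by Lemma~\ref{p_bound}. The key structural advantage over Section~\ref{sec:3} is that the differentiated system \eqref{ex_sf_dynamic} carries no inhomogeneous forcing --- the natural frequencies $\Omega_i$ have dropped out --- so the differential inequality for $\mathcal{E}_2$ will contain no constant term and will produce genuine exponential decay rather than decay into a small ball. First I would invoke Lemma~\ref{p_bound} to fix $t_*$ and to record that $D(\theta(t))\le D^\infty<\frac{\pi}{2}$ for all $t\ge t_*$; this makes the hypotheses of Lemmas~\ref{E2_f} and~\ref{E2_dotf} available on $[t_*,\infty)$, and in particular guarantees $\cos(\theta_l-\theta_i)\ge\cos D^\infty>0$ along the flow, which is the source of the dissipation of relative frequencies.

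Next I would extract two differential inequalities from \eqref{ex_sf_dynamic}. Differencing \eqref{ex_sf_dynamic} in the indices $i,j$, multiplying by $2(\omega_i-\omega_j)$, summing over $i,j$, and using \eqref{F-1}, \eqref{F-3} together with the identity $\frac{d^2}{dt^2}(\omega_i-\omega_j)^2=2(\dot\omega_i-\dot\omega_j)^2+2(\omega_i-\omega_j)(\ddot\omega_i-\ddot\omega_j)$, one gets
\begin{equation*}
\begin{aligned}
&m\frac{d^2}{dt^2}\sum_{i,j}(\omega_i-\omega_j)^2-2m\sum_{i,j}(\dot\omega_i-\dot\omega_j)^2+\frac{d}{dt}\sum_{i,j}(\omega_i-\omega_j)^2\\
&\qquad\le\Big(4Ka_u\sin\alpha\sin D^\infty-\frac{2Ka_l\cos\alpha\cos D^\infty}{1+r|E^c|}\Big)\sum_{i,j}(\omega_i-\omega_j)^2 .
\end{aligned}
\end{equation*}
Multiplying the same differenced equation instead by $4m(\dot\omega_i-\dot\omega_j)$, summing over $i,j$, and using \eqref{F-5}, \eqref{F-6}, one gets
\begin{equation*}
\begin{aligned}
&2m^2\frac{d}{dt}\sum_{i,j}(\dot\omega_i-\dot\omega_j)^2+4m\sum_{i,j}(\dot\omega_i-\dot\omega_j)^2\\
&\qquad\le\big(4mKa_u\sin\alpha\sin D^\infty+8mK^2a_u^2\cos^2\alpha\big)\sum_{i,j}(\omega_i-\omega_j)^2+\Big(4mKa_u\sin\alpha\sin D^\infty+\tfrac{m}{2}\Big)\sum_{i,j}(\dot\omega_i-\dot\omega_j)^2 .
\end{aligned}
\end{equation*}

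Then I would add the two inequalities, move the $(\dot\omega_i-\dot\omega_j)^2$ terms to the left, and add and subtract $m\sqrt{K}\,\frac{d}{dt}\sum_{i,j}(\omega_i-\omega_j)^2$, so that the left-hand side becomes exactly $\dot{\mathcal{E}}_2(t)$ plus the extra dissipative pieces $\big(\tfrac{3m}{2}-4mKa_u\sin\alpha\sin D^\infty\big)\sum_{i,j}(\dot\omega_i-\dot\omega_j)^2$ and $m\sqrt{K}\,\frac{d}{dt}\sum_{i,j}(\omega_i-\omega_j)^2$; here I use $m\frac{d}{dt}\sum(\omega_i-\omega_j)^2=2m\sum(\omega_i-\omega_j)(\dot\omega_i-\dot\omega_j)$ to recognise the cross term of \eqref{f_functional}. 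The heart of the matter is then to absorb the three positive coefficients multiplying $\sum(\omega_i-\omega_j)^2$ into the single genuinely negative coefficient $\tfrac{2Ka_l\cos\alpha\cos D^\infty}{1+r|E^c|}$: by \eqref{Con_12} (and $m<1$) each of the two $\sin\alpha$ contributions is at most $\tfrac16$ of it, and by the second bound in \eqref{Con_14} so is $8mK^2a_u^2\cos^2\alpha$, which leaves a net coefficient $\le-\tfrac{Ka_l\cos\alpha\cos D^\infty}{1+r|E^c|}\le-\sqrt{K}(1-m\sqrt{K})$ by \eqref{Con_13}; similarly \eqref{Con_15} gives $\tfrac{3m}{2}-4mKa_u\sin\alpha\sin D^\infty\ge m\ge 2m^2\sqrt{K}$ since $m\sqrt{K}<\tfrac18$. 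Collecting these estimates reduces everything to
\begin{equation*}
\dot{\mathcal{E}}_2(t)\le-\sqrt{K}\,\mathcal{E}_2(t),\qquad t\ge t_*,
\end{equation*}
and Gronwall's inequality yields $\mathcal{E}_2(t)\le\mathcal{E}_2(t_*)e^{-\sqrt{K}(t-t_*)}$. Finally, applying Lemma~\ref{f_positive} (valid since $m\sqrt{K}<\tfrac18$) gives $D(\omega(t))\le\sqrt{8\mathcal{E}_2(t)}\le\sqrt{8\mathcal{E}_2(t_*)}\,e^{-\frac{\sqrt{K}}{2}(t-t_*)}$, which is the assertion.

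The hard part is not conceptual but is the same bookkeeping of constants as in Lemma~\ref{func1_dynamic}: one must check that the positive drift created by the frustration terms and by the inertial cross-terms is \emph{strictly} dominated, with a definite margin (roughly a factor $\tfrac12$), by the cosine-driven dissipation $\tfrac{2Ka_l\cos\alpha\cos D^\infty}{1+r|E^c|}$, and then that the surviving margin is still large enough, via \eqref{Con_13}, to beat $\sqrt{K}(1-m\sqrt{K})$ so that the clean rate $\sqrt{K}$ emerges. This is precisely where the quantitative smallness conditions \eqref{Con_12}, \eqref{Con_13}, \eqref{Con_14} and \eqref{Con_15} of Assumption~$(\mathcal{A})$ are consumed.
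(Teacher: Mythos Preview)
Your proposal is correct and follows essentially the same route as the paper: derive the two inequalities by multiplying the differenced frequency equation by $2(\omega_i-\omega_j)$ and by $4m(\dot\omega_i-\dot\omega_j)$, feed in Lemmas~\ref{E2_f} and~\ref{E2_dotf}, absorb the three positive $\sum(\omega_i-\omega_j)^2$ coefficients into the cosine dissipation via \eqref{Con_12} and \eqref{Con_14} and the stray $(\dot\omega_i-\dot\omega_j)^2$ coefficient via \eqref{Con_15}, and then use \eqref{Con_13} together with $m\sqrt{K}<\tfrac18$ to close to $\dot{\mathcal{E}}_2\le-\sqrt{K}\,\mathcal{E}_2$. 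The paper organises the absorption step by labelling the four stray terms $\mathcal{S}_1,\dots,\mathcal{S}_4$ and bounding each separately, but the arithmetic and the use of the assumptions match yours.
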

\begin{proof}
We split the proof into three steps.

\noindent $\bullet$ \textbf{Step 1:} According to \eqref{ex_sf_dynamic}, we have
\begin{equation}\label{E-1}
\begin{aligned}
m (\ddot{\omega}_i - \ddot{\omega}_j) + (\dot{\omega}_i - \dot{\omega}_j) &= \frac{K}{N} \cos \alpha \sum_{l=1}^N [a_{il} \cos (\theta_l - \theta_i) (\omega_l - \omega_i) - a_{jl} \cos (\theta_l - \theta_j) (\omega_l - \omega_j)]\\
&- \frac{K}{N} \sin \alpha \sum_{l=1}^N [a_{il} \sin (\theta_l - \theta_i) (\omega_l - \omega_i) - a_{jl} \sin (\theta_l - \theta_j) (\omega_l - \omega_j)].
\end{aligned}
\end{equation}
Next, we multiply \eqref{E-1} by $2(\omega_i - \omega_j)$ to get
\begin{equation}\label{E-3}
\begin{aligned}
&m \left[\frac{d^2}{dt^2} (\omega_i - \omega_j)^2 - 2(\dot{\omega}_i - \dot{\omega}_j)^2  \right] + \frac{d}{dt} (\omega_i - \omega_j)^2 \\
&=  \frac{K}{N} \cos \alpha \sum_{l=1}^N [a_{il} \cos (\theta_l - \theta_i) (\omega_l - \omega_i) - a_{jl} \cos (\theta_l - \theta_j) (\omega_l - \omega_j)] \cdot 2(\omega_i - \omega_j)\\
&- \frac{K}{N} \sin \alpha \sum_{l=1}^N [a_{il} \sin (\theta_l - \theta_i) (\omega_l - \omega_i) - a_{jl} \sin (\theta_l - \theta_j) (\omega_l - \omega_j)] \cdot 2(\omega_i - \omega_j).
\end{aligned}
\end{equation}
Moreover, based on Lemma \ref{p_bound}, we sum \eqref{E-3} over $i,j$ and further apply the estimates \eqref{F-1} and \eqref{F-3} in Lemma \ref{E2_f} to obtain that for $t \ge t_*$,
\begin{equation}\label{E-4}
\begin{aligned}
&m \left[\frac{d^2}{dt^2} \sum_{i=1}^N \sum_{j=1}^N (\omega_i - \omega_j)^2 - 2\sum_{i=1}^N \sum_{j=1}^N(\dot{\omega}_i - \dot{\omega}_j)^2  \right] + \frac{d}{dt} \sum_{i=1}^N \sum_{j=1}^N (\omega_i - \omega_j)^2 \\
&\le 4 K a_u \sin \alpha \sin D^\infty \sum_{i=1}^N \sum_{j=1}^N (\omega_i - \omega_j)^2 - 2K a_l \cos \alpha  \cos D^\infty \frac{1}{1+ r|E^c|} \sum_{i=1}^N \sum_{j=1}^N (\omega_i - \omega_j)^2.
\end{aligned}
\end{equation}

\noindent \textbf{Step 2:} Next, we multiply \eqref{E-1} by $2(\dot{\omega}_i - \dot{\omega}_j)$ on both sides to have
\begin{equation}\label{E-11}
\begin{aligned}
&m \frac{d}{dt} (\dot{\omega}_i - \dot{\omega}_j)^2 + 2(\dot{\omega}_i - \dot{\omega}_j)^2 \\
&=  \frac{K}{N} \cos \alpha \sum_{l=1}^N [a_{il} \cos (\theta_l - \theta_i) (\omega_l - \omega_i) - a_{jl} \cos (\theta_l - \theta_j) (\omega_l - \omega_j)] \cdot 2(\dot{\omega}_i - \dot{\omega}_j)\\
&- \frac{K}{N} \sin \alpha \sum_{l=1}^N [a_{il} \sin (\theta_l - \theta_i) (\omega_l - \omega_i) - a_{jl} \sin (\theta_l - \theta_j) (\omega_l - \omega_j)] \cdot 2(\dot{\omega}_i - \dot{\omega}_j).
\end{aligned}
\end{equation}
We sum over $i,j$ and multiply by $2m$ on both sides of  \eqref{E-11}, then  
we exploit Lemma \ref{p_bound} and the estimates \eqref{F-5} and \eqref{F-6} in Lemma \ref{E2_dotf} to get
\begin{equation}\label{F-8}
\begin{aligned}
&2m^2 \frac{d}{dt} \sum_{i=1}^N \sum_{j=1}^N (\dot{\omega}_i - \dot{\omega}_j)^2 + 4m\sum_{i=1}^N \sum_{j=1}^N(\dot{\omega}_i - \dot{\omega}_j)^2 \\
&\le 4mK a_u \sin \alpha \sin D^\infty \sum_{i=1}^N \sum_{j=1}
^N (\omega_i - \omega_j)^2 + 4mK a_u \sin \alpha \sin D^\infty \sum_{i=1}^N \sum_{j=1}
^N (\dot{\omega}_i - \dot{\omega}_j)^2 \\
&+ 8mK^2a_u^2 \cos^2 \alpha \sum_{i=1}^N \sum_{j=1}^N (\omega_i - \omega_j)^2 + \frac{m}{2} \sum_{i=1}^N \sum_{j=1}^N (\dot{\omega}_i - \dot{\omega}_j)^2, \quad t \ge t_*.
\end{aligned}
\end{equation}
\noindent \textbf{Step 3:} Now we add \eqref{E-4} and \eqref{F-8} together to deduce
 that for $t \ge t_*$,
\begin{equation}\label{E-19}
\begin{aligned}
&\frac{d}{dt}\left[ m\frac{d}{dt} \sum_{i=1}^N \sum_{j=1}^N (\omega_i - \omega_j)^2 + (1 - m\sqrt{K})\sum_{i=1}^N \sum_{j=1}^N (\omega_i - \omega_j)^2 + 2m^2\sum_{i=1}^N \sum_{j=1}^N (\dot{\omega}_i - \dot{\omega}_j)^2   \right]\\
&\leq \mathcal{S}_1 + \mathcal{S}_2 + \mathcal{S}_3 + \mathcal{S}_4 - m\sqrt{K} \frac{d}{dt} \sum_{i=1}^N \sum_{j=1}^N (\omega_i - \omega_j)^2 \\
&- 2K a_l \cos \alpha  \cos D^\infty \frac{1}{1+ r|E^c|} \sum_{i=1}^N \sum_{j=1}^N (\omega_i - \omega_j)^2- \frac{3}{2}m \sum_{i=1}^N \sum_{j=1}^N(\dot{\omega}_i - \dot{\omega}_j)^2,\\
\end{aligned}
\end{equation}
where 
\begin{align*}
	&\mathcal{S}_1=4 K a_u \sin \alpha \sin D^\infty \sum_{i=1}^N \sum_{j=1}^N (\omega_i - \omega_j)^2,\quad \mathcal{S}_2= 4mK a_u \sin \alpha \sin D^\infty \sum_{i=1}^N \sum_{j=1}
     ^N (\omega_i - \omega_j)^2,\\
    &\mathcal{S}_3= 8mK^2a_u^2 \cos^2 \alpha \sum_{i=1}^N \sum_{j=1}^N (\omega_i - \omega_j)^2,\quad \mathcal{S}_4=4mK a_u \sin \alpha \sin D^\infty \sum_{i=1}^N \sum_{j=1}
^N (\dot{\omega}_i - \dot{\omega}_j)^2.
\end{align*}

%
%

\noindent $\diamond$ {\bf{Estimate of $\mathcal{S}_1$:}} Under the assumption $\eqref{Con_12}_1$ that
\begin{equation*}
\sin \alpha < \frac{a_l \cos \alpha \cos D^\infty}{12a_u (1+ r|E^c|) \sin D^\infty},
\end{equation*}
we have
\begin{equation}\label{E-20}
\mathcal{S}_1= 4 K a_u \sin \alpha \sin D^\infty \sum_{i=1}^N \sum_{j=1}^N (\omega_i - \omega_j)^2 < \frac{1}{3} K a_l \cos \alpha  \cos D^\infty \frac{1}{1+ r|E^c|} \sum_{i=1}^N \sum_{j=1}^N (\omega_i - \omega_j)^2.
\end{equation}

\noindent $\diamond$ {\bf{Estimate of $\mathcal{S}_2$:}} Owing to the assumption $\eqref{Con_12}$ that
\begin{equation*}
 \sin \alpha <  \frac{a_l \cos \alpha \cos D^\infty}{12 a_u (1 + r|E^c|)\sin D^\infty}, \quad m < 1,
\end{equation*}
it follows that
\begin{equation}\label{E-21}
\mathcal{S}_2  = 4mK a_u \sin \alpha \sin D^\infty \sum_{i=1}^N \sum_{j=1}
^N (\omega_i - \omega_j)^2 < \frac{1}{3} K a_l \cos \alpha  \cos D^\infty \frac{1}{1+ r|E^c|} \sum_{i=1}^N \sum_{j=1}^N (\omega_i - \omega_j)^2.
\end{equation}

\noindent $\diamond$ {\bf{Estimate of $\mathcal{S}_3$:}} According to the assumption $\eqref{Con_14}_2$ that
\begin{equation*}
mK < \frac{a_l  \cos D^\infty}{24 a_u^2 (1+ r|E^c|) \cos\alpha },
\end{equation*}
we have
\begin{equation}\label{E-22}
\begin{aligned}
\mathcal{S}_3& = 8mK^2a_u^2 \cos^2 \alpha \sum_{i=1}^N \sum_{j=1}^N (\omega_i - \omega_j)^2 < \frac{1}{3} K a_l \cos \alpha  \cos D^\infty \frac{1}{1+ r|E^c|} \sum_{i=1}^N \sum_{j=1}^N (\omega_i - \omega_j)^2 .
\end{aligned}
\end{equation}

\noindent $\diamond$ {\bf{Estimate of $\mathcal{S}_4$:}}
Due to the assumption \eqref{Con_15} that
\begin{equation*}
K\sin \alpha < \frac{1}{8a_u\sin D^\infty},
\end{equation*}
we obtain that 
\begin{equation}\label{E-23}
\begin{aligned}
\mathcal{S}_4 = 4mK a_u \sin \alpha \sin D^\infty \sum_{i=1}^N \sum_{j=1}
^N (\dot{\omega}_i - \dot{\omega}_j)^2 < \frac{m}{2} \sum_{i=1}^N \sum_{j=1}^N (\dot{\omega}_i - \dot{\omega}_j)^2.
\end{aligned}
\end{equation}

Therefore we combine \eqref{E-19}-\eqref{E-23} to get for $t \ge t_*$,
\begin{equation}\label{E-24}
\begin{aligned}
&\frac{d}{dt}\left[ m\frac{d}{dt} \sum_{i=1}^N \sum_{j=1}^N (\omega_i - \omega_j)^2 + (1 - m\sqrt{K})\sum_{i=1}^N \sum_{j=1}^N (\omega_i - \omega_j)^2 + 2m^2\sum_{i=1}^N \sum_{j=1}^N (\dot{\omega}_i - \dot{\omega}_j)^2   \right]\\
&\le - m\sqrt{K} \frac{d}{dt} \sum_{i=1}^N \sum_{j=1}^N (\omega_i - \omega_j)^2- K a_l \cos \alpha  \cos D^\infty \frac{1}{1+ r|E^c|} \sum_{i=1}^N \sum_{j=1}^N (\omega_i - \omega_j)^2 \\
&- m \sum_{i=1}^N \sum_{j=1}^N(\dot{\omega}_i - \dot{\omega}_j)^2.
\end{aligned}
\end{equation}
Moreover, applying the assumptions \eqref{Con_13} and $\eqref{Con_14}_1$ that
\begin{equation*}
\sqrt{K} a_l \cos \alpha  \cos D^\infty \frac{1}{1+ r|E^c|} > 1 > 1-m\sqrt{K}, \quad \frac{m}{\sqrt{K}} > 2m^2 .
\end{equation*}
we derive from \eqref{E-24} that for $t \ge t_*$,
\begin{equation*}
\begin{aligned}
&\frac{d}{dt}\left[ m\frac{d}{dt} \sum_{i=1}^N \sum_{j=1}^N (\omega_i - \omega_j)^2 + (1 - m\sqrt{K})\sum_{i=1}^N \sum_{j=1}^N (\omega_i - \omega_j)^2 + 2m^2\sum_{i=1}^N \sum_{j=1}^N (\dot{\omega}_i - \dot{\omega}_j)^2   \right]\\
&\le -\sqrt{K} \left[ m \frac{d}{dt} \sum_{i=1}^N \sum_{j=1}^N (\omega_i - \omega_j)^2 + \sqrt{K} a_l \cos \alpha  \cos D^\infty \frac{1}{1+ r|E^c|} \sum_{i=1}^N \sum_{j=1}^N (\omega_i - \omega_j)^2 \right. \\
&\left. \qquad \qquad +  \frac{m}{\sqrt{K}}\sum_{i=1}^N \sum_{j=1}^N(\dot{\omega}_i - \dot{\omega}_j)^2\right]\\
& \le -\sqrt{K} \left[ m \frac{d}{dt} \sum_{i=1}^N \sum_{j=1}^N (\omega_i - \omega_j)^2+(1 - m\sqrt{K})\sum_{i=1}^N \sum_{j=1}^N (\omega_i - \omega_j)^2 + 2m^2 \sum_{i=1}^N \sum_{j=1}^N(\dot{\omega}_i - \dot{\omega}_j)^2 \right].
\end{aligned}
\end{equation*}
Thus, it follows from \eqref{f_functional} that
\begin{equation*}
\dot{\mathcal{E}}_2(t) \le -\sqrt{K} \mathcal{E}_2(t), \quad t \ge t_*,
\end{equation*}
which immediately leads to
\begin{equation*}
\mathcal{E}_2(t) \le \mathcal{E}_2(t_*) e^{-\sqrt{K}(t - t_*)}, \quad t \ge t_*.
\end{equation*}
Then according to Lemma \ref{f_positive}, we ultimately obtain that
\begin{equation*}
D(\omega(t)) \le \sqrt{8\mathcal{E}_2(t)} \le \sqrt{8\mathcal{E}_2(t_*)} e^{- \frac{\sqrt{K}}{2}(t - t_*)}, \quad t \ge t_*, 
\end{equation*}
\end{proof}

Now, we are ready to provide the proof of Theorem \ref{main}.\newline

\noindent \textbf{Proof of Theorem \ref{main}:}
\begin{proof}
We combine Lemma \ref{p_bound} and Lemma \ref{f_expo} to complete the proof of Theorem \ref{main}.
\end{proof}

\section{Summary}\label{sec:5}
\setcounter{equation}{0}

We studied the dynamics of Kuramoto oscillators under the interplay of the effects of inertia and frustration under a connected and undirected graph. For this, we constructed two energy functionals which can bound the phase and frequency diameters, respectively. Under a sufficient regime in terms of initial data, small inertia, small frustration and large coupling, we first derived that the phase diameter will shrink to a small region after some finite time by the energy method. Next, we similarly applied the energy estimates and showed that the frequency diameter approaches to zero at the exponential rate. This ultimately yields the emergence of complete frequency synchronization. However, the interaction network we considered here is rather limited. In a general network, the energy functionals we constructed here become useless. In that case, we need to reconsider the dynamics of the diameters, and this will be our future work.

\section*{Declarations}

The work of T. Zhu is supported by the National Natural Science Foundation of China (Grant/Award Number: 12201172), the Natural Science Research Project of Universities in Anhui Province, China (Project Number: 2022AH051790) and  the Talent Research Fund of Hefei University, China (Grant/Award Number: 21-22RC23).

\end{document}